\newtheorem*{thm*}{Theorem}
\newtheorem{thm}{Theorem}[section]%[chapter]
\newtheorem{lemma}[thm]{Lemma}
\newtheorem{conjecture}[thm]{Conjecture}
\newtheorem*{prop*}{Proposition}
\newtheorem{clm}[thm]{Claim}
\newcommand\ex{\ensuremath{\mathrm{ex}}}
\newcommand\cH{{\mathcal H}}
\newcommand\cN{{\mathcal N}}
\newcommand{\ignore}[1]{}
\title{On non-degenerate Berge-Turán problems}
\author{Dániel Gerbner\\\small Alfr\'ed R\'enyi Institute of Mathematics\\
\small \texttt{gerbner.daniel@renyi.hu}}
\date{}
\begin{document}

\maketitle

\begin{abstract}
Given a hypergraph $\cH$ and a graph $G$, we say that $\cH$ is a \textit{Berge}-$G$ if there is a bijection between the hyperedges of $\cH$ and the edges of $G$ such that each hyperedge contains its image. We denote by $\ex_k(n,\textup{Berge-}F)$ the largest number of hyperedges in a $k$-uniform Berge-$F$-free graph. Let $\ex(n,H,F)$ denote the largest number of copies of $H$ in $n$-vertex $F$-free graphs. It is known that $\ex(n,K_k,F)\le \ex_k(n,\textup{Berge-}F)\le \ex(n,K_k,F)+\ex(n,F)$, thus if $\chi(F)>r$, then $\ex_k(n,\textup{Berge-}F)=(1+o(1)) \ex(n,K_k,F)$. We conjecture that $\ex_k(n,\textup{Berge-}F)=\ex(n,K_k,F)$ in this case. We prove this conjecture in several instances, including the cases $k=3$ and $k=4$. We prove the general bound $\ex_k(n,\textup{Berge-}F)= \ex(n,K_k,F)+O(1)$.
\end{abstract}

\textbf{Keywords:} Berge hypergraph, Tur\'an

%\smallskip

%\textbf{MSC 2020:} 05C35

\section{Introduction}

Given a hypergraph $\cH$ and a graph $G$, we say that $\cH$ is a \textit{Berge copy} of $G$ (in short: a Berge-$G$) if there is a bijection between the hyperedges of $\cH$ and the edges of $G$ such that each hyperedge contains its image.

Berge hypergraphs were introduced by Gerbner and Palmer \cite{gp1} as a generalization of the notion of hypergraph cycles due to Berge.

A closely connected area is that of generalized Tur\'an problems. Given graphs $H$ and $G$, we let $\cN(H,G)$ denote the number of copies of $H$ in $G$. Let $\ex(n,H,F):=\max \{\cN(H,G): \text{ $G$ is an $n$-vertex $F$-free graph}\}$. The sytematic study of this topic was initiated by Alon and Shikhelman \cite{ALS2016} after several sporadic results.

The connection between Berge-Tur\'an problems and generalized Tur\'an problems was established by Gerbner and Palmer \cite{gp2}, who showed that $\ex(n,K_k,F)\le \ex_k(n,\textup{Berge-}F)\le \ex(n,K_k,F)+\ex(n,F)$. The upper bound was strengthened by F\"uredi, Kostochka, and Luo~\cite{fkl} and independently by Gerbner, Methuku and Palmer \cite{gmp}. To state this result, we need some definition.

A \textit{blue-red graph} $G$ is a graph with each edge colored blue or red. We denote by $G_{blue}$ the subgraph consisting of the blue edges and by $G_{red}$ the subgraph consisting of the red edges.
We say that a blue-red graph $G$ is $F$-free if $G$ does not contain $F$ (here we do not care about the colors). Given an integer $k\ge 3$, let $g(G):=\cN(K_k,G_{blue})+|E(G_{red})|$. Let $\ex^{{\mathrm col}}(n,F):=\max\{g(G): \text{ $G$ is an $n$-vertex $F$-free graph}\}$.

\begin{lemma}[\cite{fkl},\cite{gmp}]\label{celeb2}
    For any $F$ we have $\ex_k(n,\textup{Berge-}F)\le\ex^{{\mathrm col}}(n,F)$.
\end{lemma}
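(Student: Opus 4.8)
The plan is to turn a $k$-uniform Berge-$F$-free hypergraph $\cH$ on vertex set $V:=V(\cH)$ with $|V|=n$ and $m$ hyperedges into an $n$-vertex $F$-free blue-red graph $G$ with $g(G)\ge m$; since $\ex^{{\mathrm col}}(n,F)$ is by definition the maximum of $g$ over all such graphs, this gives $\ex_k(n,\textup{Berge-}F)\le\ex^{{\mathrm col}}(n,F)$. Throughout I use that $\cH$ is a simple $k$-uniform hypergraph, so its hyperedges are $m$ pairwise distinct $k$-element subsets of $V$, in particular distinct copies of $K_k$.

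First I would introduce the bipartite incidence graph $\cB$ with parts $E(\cH)$ and $\binom V2$, joining $h\in E(\cH)$ to $p\in\binom V2$ exactly when $p\subseteq h$. By König's theorem $\cB$ has a maximum matching $M$ and a minimum vertex cover $C=C_L\cup C_R$ with $C_L\subseteq E(\cH)$, $C_R\subseteq\binom V2$ and $|M|=|C_L|+|C_R|$, and via the standard alternating-path proof of König's theorem I would take $C$ to be ``aligned'' with $M$: each edge of $M$ meets $C$ in exactly one endpoint, each hyperedge of $C_L$ is matched by $M$ to a pair outside $C_R$, and each pair of $C_R$ is matched by $M$ to a hyperedge outside $C_L$. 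Let $S$ be the set of hyperedges uncovered by $M$; since the matched hyperedges are exactly those of $C_L$ together with the $|C_R|$ partners of the pairs in $C_R$, this gives $m=|C_L|+|C_R|+|S|$. The crucial observation is that every hyperedge $h\notin C_L$ has all $\binom k2$ of its pairs in $C_R$ (each edge $hp$ of $\cB$ is covered by $C$, but not at $h$); hence each of the $m-|C_L|=|C_R|+|S|$ hyperedges outside $C_L$ is a $K_k$ in the graph $\Gamma:=(V,C_R)$, and these are pairwise distinct, so $\cN(K_k,\Gamma)\ge|C_R|+|S|$.

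Next I would build $G$ on $V$ by colouring the pairs of $C_R$ blue and, for each $h\in C_L$, colouring its $M$-partner $\psi(h)\in\binom V2\setminus C_R$ red; here $\psi(h)\subseteq h$, $\psi$ is injective, and the $|C_L|$ red pairs are disjoint from the blue ones, so this is a legitimate blue-red graph with $G_{blue}=\Gamma$. Then $\cN(K_k,G_{blue})=\cN(K_k,\Gamma)\ge|C_R|+|S|$ and $|E(G_{red})|=|C_L|$, so $g(G)\ge(|C_R|+|S|)+|C_L|=m$. To check that $G$ is $F$-free, suppose it contained a copy $F_0$ of $F$. Each edge $e$ of $F_0$ is either blue, in which case $e\in C_R$ and $M$ matches $e$ to a hyperedge $\mu(e)\in E(\cH)\setminus C_L$ with $e\subseteq\mu(e)$, or red, in which case $e=\psi(h)$ for a unique $h=:\mu(e)\in C_L$ with $e\subseteq\mu(e)$. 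The map $\mu$ is injective on the blue edges of $F_0$ (a restriction of $M$) and on the red edges of $F_0$ (a restriction of $\psi^{-1}$), and its two ranges $E(\cH)\setminus C_L$ and $C_L$ are disjoint, so $\mu$ is injective on all of $E(F_0)$. Hence $\{\mu(e):e\in E(F_0)\}$, together with the bijection carrying each edge of $F$ to $\mu$ of the corresponding edge of $F_0$, is a Berge copy of $F$ in $\cH$, contradicting the hypothesis.

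The step I expect to require the most care is the König-theorem bookkeeping: producing the aligned minimum cover $C$ (each $M$-edge meeting $C$ once, $C_L$ matched outside $C_R$, $C_R$ matched outside $C_L$) and deducing $m=|C_L|+|C_R|+|S|$, and then making sure the two $\mu$-ranges $E(\cH)\setminus C_L$ and $C_L$ really are disjoint, since that disjointness is exactly what makes $\mu$ an injective system of representatives and hence forces the forbidden Berge-$F$. Everything else is immediate: the hyperedges outside $C_L$ are automatically $K_k$'s of $(V,C_R)$, the red pairs automatically avoid $C_R$, and the argument uses no hypothesis on $F$ at all.
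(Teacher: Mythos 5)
Your proof is correct: the König alignment you want (each $M$-edge meeting the minimum cover exactly once, so every hyperedge outside $C_L$ has all its pairs in $C_R$ and every vertex of the cover is matched) holds automatically for any maximum matching and minimum cover, the count $m=|C_L|+|C_R|+|S|$ and the bound $\cN(K_k,(V,C_R))\ge |C_R|+|S|$ both check out, and the injective map $\mu$ does produce a Berge-$F$ from any copy of $F$ in $G$. The paper itself gives no proof of Lemma~\ref{celeb2} (it is quoted from \cite{fkl,gmp}), and your argument via a maximum matching in the hyperedge--pair incidence bipartite graph plus König's theorem is essentially the standard proof appearing in those references.
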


A hypergraph Tur\'an problem is called degenerate if the order of magnitude of the extremal function is smaller than the largest possible, i.e. smaller than $n^k$ in our case. By the above bounds, $\ex_k(n,\textup{Berge-}F)=o(n^k)$ if and only if $\ex(n,K_k,F)=o(n^k)$, which happens if and only if $\chi(F)\le k$ by a result of Alon and Shikhelman \cite{ALS2016}. Another result of Alon and Shikhelman \cite{ALS2016} shows that if $\chi(F)=r+1>k$, then $\ex(n,K_k,F)=(1+o(1))\cN(K_k,T(n,r))=(1+o(1))\binom{r}{k}\left(\frac{n}{r}\right)^k$.

In the non-degenerate case, for $k\ge 3$ we have that $\ex(n,F)=O(n^2)=o(\ex(n,K_k,F))$, thus $\ex_k(n,\textup{Berge-}F)=(1+o(1)) \ex(n,K_k,F)$.
We believe that a stronger connection also holds.

\begin{conjecture}\label{sejt} If $\chi(F)>k$ and $n$ is sufficiently large, then $\ex_k(n,\textup{Berge-}F)=\ex(n,K_k,F)$.
\end{conjecture}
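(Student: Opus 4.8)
The plan is to establish the lower bound by the standard construction and to obtain the upper bound by extracting from a dense Berge-$F$-free hypergraph an $F$-free graph with essentially the same number of $k$-cliques; the delicate point is to lose nothing, or almost nothing, in that extraction.

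\emph{Lower bound.} Let $G$ be an $n$-vertex $F$-free graph with $\cN(K_k,G)=\ex(n,K_k,F)$ and let $\cH$ have as hyperedges exactly the vertex sets of the copies of $K_k$ in $G$; then $\cH$ has $\ex(n,K_k,F)$ hyperedges and is Berge-$F$-free, because a Berge-$F$ on vertices $v_1,\dots,v_t$ would force $v_iv_j\in E(G)$ whenever $v_iv_j$ is an edge of that copy of $F$ (the hyperedge assigned to $v_iv_j$ is a $k$-clique of $G$ containing $v_i$ and $v_j$), producing a copy of $F$ in $G$.

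\emph{Reduction of the upper bound.} Let $\cH$ be $k$-uniform and Berge-$F$-free with $m$ hyperedges, and put $r+1=\chi(F)$, so $r\ge k$. The bound $m\le\ex(n,K_k,F)+\ex(n,F)$ of \cite{gp2} together with $\ex(n,F)=o(\ex(n,K_k,F))$ gives $m=\ex(n,K_k,F)-o(n^k)$, so the blue subgraph in the decomposition of \cite{fkl,gmp} behind Lemma~\ref{celeb2} is an $F$-free graph with $(1-o(1))\ex(n,K_k,F)$ copies of $K_k$; a stability counterpart of the estimate $\ex(n,K_k,F)=(1+o(1))\cN(K_k,T(n,r))$ of \cite{ALS2016} then yields a partition $V(\cH)=V_1\cup\dots\cup V_r$ into parts of size $n/r+o(n)$ so that all but $o(n^2)$ cross pairs span blue edges and all but $o(n^2)$ within-part pairs are non-edges. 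Call a hyperedge \emph{transversal} if its $k$ vertices lie in $k$ distinct parts. A transversal hyperedge is the vertex set of a copy of $K_k$ in $T(n,r)$, and distinct transversal hyperedges give distinct cliques, so there are at most $\cN(K_k,T(n,r))\le\ex(n,K_k,F)$ of them. It therefore remains to bound the number $s$ of non-transversal hyperedges: by $O(1)$ for the estimate $\ex_k(n,\textup{Berge-}F)=\ex(n,K_k,F)+O(1)$, and by $\ex(n,K_k,F)-\cN(K_k,T(n,r))$ for the conjecture. This slack is $0$ for $F=K_{r+1}$, but in general it measures the room an $F$-free graph has for extra within-part edges — a matching when $F=K_r\ast\overline{K_2}$, for instance — and the non-transversal hyperedges must be absorbed exactly into that room.

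\emph{Bounding $s$, and the main obstacle.} A non-transversal hyperedge carries an internal pair $\{a,b\}$ with $a,b$ in one part, and the point is that too many, or too badly arranged, internal pairs can be combined with transversal hyperedges into a Berge-$F$. Since $\chi(F)=r+1$, every $r$-colouring of $V(F)$ has a monochromatic edge; fix one with the minimum number $\mu\ge 1$ of monochromatic edges, and try to place a copy of $F$ with its $\mu$ monochromatic edges supported by $\mu$ of the internal pairs furnished by the non-transversal hyperedges and its remaining (cross) edges supported by distinct transversal hyperedges of $\cH$. The second task is greedy: a fixed cross pair lies in $\Theta(n^{k-2})$ copies of $K_k$ of $T(n,r)$, and since $m$ is near-extremal $\cH$ keeps all but a few of them, leaving room for the at most $|E(F)|$ choices to be made distinct once $n$ is large (a point needing care here is that $\cH$ must retain enough transversal hyperedges even before $s$ is known to be small — but if it did not, then $m\le\ex(n,K_k,F)$ already, so the embedding has to run precisely in the regime in which it can). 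The first task is where $s$ is constrained: a pigeonhole/Ramsey argument on the internal pairs should produce, once $s$ is too large, $\mu$ of them fitting together with a legitimate placement, yielding the forbidden Berge-$F$. The real difficulty is carrying this out with the exact constant: for $k=3$ and $k=4$ a non-transversal hyperedge has only a few possible intersection patterns with the partition (for $k=3$: two vertices in one part and the third in another, or all three in one part), and one can analyse case by case which configurations of boundedly many such hyperedges already force a Berge-$F$, pinning $s$ down to exactly what the slack permits; for larger $k$ the patterns proliferate and several internal pairs may lie inside one hyperedge, so I expect only the coarse pigeonhole bound to survive, giving $s=O(1)$ and hence $\ex_k(n,\textup{Berge-}F)=\ex(n,K_k,F)+O(1)$ in general.
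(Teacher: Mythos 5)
The statement you are trying to prove is Conjecture~\ref{sejt}, which the paper itself does not prove: it only establishes the special cases of a color-critical vertex (Theorem~\ref{cc}), $k\le 4$ or $\chi(F)$ large (Theorem~\ref{kisk}), and the general weakening $\ex_k(n,\textup{Berge-}F)=\ex(n,K_k,F)+O(1)$ (Theorem~\ref{konst}). Your proposal, by its own admission, ends in the same place at best ($s=O(1)$ in general, with an unexecuted ``case analysis'' for $k=3,4$), so it is not a proof of the conjecture; but beyond that, the two steps on which even the weaker claims rest are genuinely gappy. First, your greedy embedding assumes that a prescribed cross pair lies in many hyperedges of $\cH$ (``$\cH$ keeps all but a few of them''). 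Near-extremality of the \emph{total} count gives no per-pair information: specific pairs needed for the embedding may lie in no hyperedge at all. The paper gets the local information it needs by an entirely different mechanism, namely Zykov-type symmetrization giving a minimum-degree statement for every vertex (Theorem~\ref{strucc}~\textbf{(i)},\textbf{(iii)}), and it works in the colored-graph model of Lemma~\ref{celeb2} rather than embedding directly into the hypergraph. Second, your ``pigeonhole/Ramsey argument'' that many internal pairs force a Berge-$F$ is exactly the crux and cannot work as stated: internal pairs may legitimately concentrate on a bounded set of universal-type vertices (the set $B$ of Theorem~\ref{strucc}~\textbf{(ii)}), as happens in genuine extremal configurations such as $K_{s-1}+T(n-s+1,r)$ for the graphs $F$ of Theorem~\ref{blab}, where within-part edges at the apex vertices do \emph{not} create a copy of $F$. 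Any argument bounding $s$ by the ``slack'' must first isolate such a bounded exceptional set and control degrees into the parts, which is precisely what the paper's structural theorem does and your sketch omits.

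A smaller point: your lower-bound construction (hyperedges $=$ vertex sets of $k$-cliques of an extremal $F$-free graph) is correct and is the standard Gerbner--Palmer bound, and your count of transversal hyperedges by $\cN(K_k,T(n,r))$ is fine, though it silently uses that the balanced complete $r$-partite graph maximizes the number of $K_k$'s among complete $r$-partite graphs. But note that $\cN(K_k,T(n,r))$ can be strictly smaller than $\ex(n,K_k,F)$ (again $K_{s-1}+T(n-s+1,r)$ is the example), so even a sharp bound on non-transversal hyperedges by the ``slack'' would require identifying the true extremal graph for $\ex(n,K_k,F)$, which is unknown in general; this is one reason the conjecture remains open and why the paper's route through $\ex^{\mathrm{col}}(n,F)$ reduces the problem to showing there are no red edges rather than to an absorption count.
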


The above conjecture is known to hold in the case $F$ has a color-critical edge (an edge whose deletion decreases the chromatic number). The $k$-uniform \textit{expansion} $F^{+k}$ of a graph $F$ is the specific $k$-uniform Berge copy that contains the most vertices, i.e., the $k-2$ vertices added to each edge of $F$ are distinct for different edges, and distinct from the vertices of $F$. Pikhurko \cite{pikhu} showed that for $r\ge k$, the Tur\'an number of $K_{r+1}^{+k}$ is equal to $\cN(K_k,T(n,r))$ if $n$ is sufficiently large. According to the survey \cite{mubver} on expansions, Alon and Pikhurko observed that Pikhurko's proof generalizes to the case $F$ is an $(r+1)$-chromatic graph with a color-critical edge. A simpler proof for the Berge case can be found in \cite{ger}.

In general, the above observations imply that $\ex_k(n,\textup{Berge-}F)= \ex(n,K_k,F)+O(n^2)$. This was improved to $\ex_k(n,\textup{Berge-}F)= \ex(n,K_k,F)+o(n^2)$ in \cite{ge}. We further improve this bound in our next result.

\begin{thm}\label{konst}
$\ex_k(n,\textup{Berge-}F)= \ex(n,K_k,F)+O(1)$.
\end{thm}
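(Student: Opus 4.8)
The plan is to start from the colored reformulation of Lemma~\ref{celeb2}: it suffices to bound $\ex^{\mathrm{col}}(n,F)$ from above by $\ex(n,K_k,F)+O(1)$, where the constant may depend on $F$ and $k$ but not on $n$. Fix an $n$-vertex $F$-free blue-red graph $G$ attaining $\ex^{\mathrm{col}}(n,F)$, so $g(G)=\cN(K_k,G_{blue})+|E(G_{red})|$ is as large as possible. Since $F$ is fixed with $\chi(F)=r+1>k$, we know $\ex(n,K_k,F)=(1+o(1))\binom{r}{k}(n/r)^k$, and in particular this quantity grows like $\Theta(n^k)$, while $|E(G_{red})|\le\ex(n,F)=O(n^2)$. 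The heart of the argument is to show that in an \emph{optimal} $G$ the red edges cannot contribute more than a bounded amount beyond what they ``cost'' in lost blue cliques, so that one may recolor all red edges blue (or delete them) at bounded net loss, reducing to the purely blue case where $\cN(K_k,G_{blue})\le\ex(n,K_k,F)$.

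The key steps, in order, are as follows. First, I would prove a \emph{robust supersaturation / stability} statement for $\ex(n,K_k,F)$: any $n$-vertex $F$-free graph with $\cN(K_k,\cdot)$ within $o(n^k)$ of the extremum must be, after deleting $o(n^2)$ edges, a subgraph of a complete $r$-partite graph with nearly balanced parts; this follows from the Alon--Shikhelman asymptotics together with standard graph removal/stability machinery (the color-critical-edge case already in the literature, \cite{pikhu,ger}, is a model for the exact step). Second, apply this to $G$ (ignoring colors): $G$ is $F$-free with $g(G)\ge\ex(n,K_k,F)=\Theta(n^k)$, hence $\cN(K_k,G)\ge g(G)-O(n^2)$ is near-extremal, so $G$ is close to $T(n,r)$. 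Third, run a local-exchange/symmetrization argument vertex by vertex: in the near-extremal regime, moving a vertex $v$ to the part of the approximating $r$-partition where it has the most blue-clique-extension potential cannot decrease $g$, and iterating this cleans $G$ into an exactly $r$-partite blue graph plus a bounded set of ``defect'' edges. Fourth — the exact part — show that once $G_{blue}$ sits inside a complete $r$-partite graph, \emph{every} red edge is either inside a part (then it creates no new blue $K_k$ if recolored, but also deleting it loses only $1$, and there are $O(n)$... here one must be careful) or between parts; a red edge $e$ between two parts, if recolored blue, gains at least $\binom{r-2}{k-2}(n/r)^{k-2}=\Theta(n^{k-2})$ new copies of $K_k$ while the $F$-freeness is unaffected, so optimality forces the number of such red edges to be $O(n^{-(k-2)}\cdot n^2)=O(1)$ when $k\ge 3$... more precisely, each between-part red edge present in an optimum must be ``blocked'' from recoloring by an $F$ it would create, and a counting/packing argument bounds the number of such blocked edges by a constant. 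Finally, handle the within-part red edges and any remaining defect edges: a within-part red edge plus the near-$r$-partite blue structure already contains a large blue clique in $r+1$ parts, i.e.\ a $K_{r+1}$, hence an $F$-subgraph unless there are only boundedly many such edges, again forcing $O(1)$ of them. Summing up, $g(G)\le\cN(K_k,G_{blue})+O(1)\le\ex(n,K_k,T(n,r)\text{-like})+O(1)\le\ex(n,K_k,F)+O(1)$.

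The main obstacle I expect is the exact (non-asymptotic) step: turning ``$G$ is $o(n^2)$-close to $r$-partite'' into ``$G_{blue}$ is \emph{exactly} a subgraph of a complete $r$-partite graph, up to $O(1)$ edges, and the red edges number $O(1)$.'' The subtlety is that recoloring or deleting a single red edge changes $g$ by only $O(n^{k-2})$ while the ``budget'' of suboptimality we can tolerate is a priori only $o(n^2)$ from the stability step, and $n^{k-2}$ can be smaller than $n^2$ only when $k<4$ — for $k=3$ one red edge is worth $\Theta(n)$, comfortably absorbable, but for large $k$ one must instead argue that the \emph{total} defect is small because each unit of defect is individually forced by a distinct forbidden copy of $F$, and then bound the number of edge-disjoint (or boundedly-overlapping) copies of $F$ that can be packed into the defect. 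Making this packing bound uniform in $n$ — i.e.\ genuinely $O(1)$ and not $O(\log n)$ or $n^{o(1)}$ — is the delicate point; I would handle it by a cleaning argument that shows any near-extremal configuration with $\omega(1)$ defect edges can be improved, contradicting optimality, so that the defect is bounded by a constant depending only on $F$ and $k$.
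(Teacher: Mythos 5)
Your overall strategy (reduce to bounding $\ex^{\mathrm{col}}(n,F)$ via Lemma~\ref{celeb2}, invoke stability for $\ex(n,K_k,F)$, then argue by local recoloring that an optimal blue-red graph has only $O(1)$ red edges) is the same skeleton as the paper's, but the crucial step is missing and part of your mechanism for it rests on a misconception. Recoloring a red edge blue does not change the underlying graph, so it can never ``create'' a copy of $F$ — $F$-freeness of a blue-red graph is colorblind by definition. Hence no red edge is ``blocked by an $F$ it would create''; the only obstruction to the recoloring gaining $\Theta(n^{k-2})$ blue $k$-cliques is that the two endpoints may fail to have a large common blue neighborhood spread over the parts (atypical/low-degree endpoints). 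Your proposal gives no actual argument bounding the number of such atypical red edges by a constant: the ``counting/packing argument'' and the final ``cleaning argument showing $\omega(1)$ defect edges can be improved'' are exactly the hard content, not a routine appendix. The paper supplies this content through Theorem~\ref{strucc}(ii): the set $B$ of vertices with $\ge\varepsilon n$ neighbors in their own part has size at most $rK(\sigma(F)-1)=O(1)$ (a claim imported from Ma--Qiu, which only needs near-extremality of the $K_k$-count), every vertex satisfies a minimum-``degree'' bound obtained by Zykov-type symmetrization (part (i)), and the exceptional sets $U_i$ are shown to be empty; together these guarantee that any red edge with an endpoint outside $B$ can be recolored blue while creating at least two new blue $k$-cliques, an immediate contradiction with exact optimality (no $o(n^2)$ ``budget'' bookkeeping is needed). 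Since all red edges then lie inside the bounded set $B$, there are $O(1)$ of them, and $\cN(K_k,G_{blue})\le\ex(n,K_k,F)$ finishes the upper bound.

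Two of your intermediate claims are also false as stated and would derail the argument if pursued literally. First, you cannot clean $G_{blue}$ into ``an exactly $r$-partite graph plus a bounded set of defect edges'': for instance when $F$ consists of two disjoint $(r+1)$-chromatic pieces (e.g.\ $F=2K_{r+1}$) the extremal construction is $K_{s-1}+T(n-s+1,r)$, whose apex vertices carry $\Theta(n)$ edges inside a part; the correct statement is only that within-part edges are confined to a bounded vertex set, not that there are boundedly many of them. Second, a single within-part edge on top of a near-complete $r$-partite structure yields a $K_{r+1}$ but not necessarily a copy of $F$ when $\sigma(F)>1$, so your claim that within-part red edges force $F$ ``unless there are boundedly many'' does not hold; this is precisely why the bound on $|B|$ must go through $\sigma(F)$ rather than through finding $F$ directly.
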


We show that Conjecture \ref{sejt} holds if $F$ contains a color-critical vertex (a vertex whose deletion decreases the chromatic number).

\begin{thm}\label{cc} 
Let $\chi(F)>k$ and assume that $F$ contains a color-critical vertex. Then for sufficiently large $n$ we have
$\ex_k(n,\textup{Berge-}F)=\ex(n,K_k,F)$.
\end{thm}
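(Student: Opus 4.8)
The plan is to start from Lemma~\ref{celeb2}, which reduces the problem to bounding $\ex^{\mathrm{col}}(n,F)$: we must show that for large $n$, any $n$-vertex $F$-free blue-red graph $G$ satisfies $g(G)=\cN(K_k,G_{blue})+|E(G_{red})|\le\ex(n,K_k,F)$. Write $r=\chi(F)-1\ge k$, let $u$ be a color-critical vertex of $F$, so $F-u$ is $r$-chromatic, and fix a proper $r$-colouring of $F-u$; then $F$ embeds into the graph obtained from $T(n,r)$ by adding one extra vertex joined to everything except one part — so any such ``near-Turán'' graph must be $F$-containing, which is the rigidity we will exploit.

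Next I would observe that $\cN(K_k,G_{blue})\le\ex(n,K_k,F)$ automatically when $G_{blue}$ is $F$-free (it is, since $G$ is), so the whole issue is the red edges: we need to show that whenever $G_{red}$ is nonempty, the blue clique count drops by at least $|E(G_{red})|$ compared to the extremal value $\cN(K_k,T(n,r))$, and moreover that it drops enough even relative to the true maximum $\ex(n,K_k,F)$. The key step is a stability-plus-supersaturation argument: by the stability version of the Alon--Shikhelman theorem (or by running the argument through Pikhurko's exact result for expansions, which the excerpt grants), if $g(G)$ were close to $\ex(n,K_k,F)$ then $G_{blue}$ must be close to $T(n,r)$ in edit distance, with all but $O(1)$ vertices partitioned into $r$ near-independent classes of near-equal size. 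I would then argue that a single red edge $e$ forces $F$ unless the blue graph is ``deficient'' near $e$: concretely, since $u$ is color-critical, a red edge together with a near-complete blue $r$-partite neighbourhood around it already yields a Berge-$F$ (the red edge plays the role of an edge at $u$, and the remaining $|E(F)|-\deg_F(u)$ edges are realized by blue edges in the Turán-like structure, while the $\deg_F(u)$ edges at $u$ use the red edge plus blue edges). Hence each red edge must be ``paid for'' by a corresponding local deficiency in blue edges, and a counting/local-modification argument shows each such deficiency costs at least one blue $K_k$; summing over red edges gives $\cN(K_k,G_{blue})+|E(G_{red})|\le\cN(K_k,G'_{blue})$ for a genuinely $F$-free all-blue graph $G'$ on $n$ vertices, whence $g(G)\le\ex(n,K_k,F)$.

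More carefully, the cleanest route is probably: first apply Theorem~\ref{konst} to get $\ex_k(n,\textup{Berge-}F)\le\ex(n,K_k,F)+C$ for a constant $C=C(F,k)$, so we only need to rule out a bounded surplus. Then suppose for contradiction $g(G)\ge\ex(n,K_k,F)+1$ with $G_{red}\ne\emptyset$. Pick a red edge $xy$. Recolour $xy$ blue to form $G^*$; then $G^*$ is a blue-red graph (all blue) with $\cN(K_k,G^*_{blue})\ge g(G)-\binom{n-2}{k-2}\cdots$ — this naive bound is too weak, so instead I would delete $x$ (or $y$): in $G-x$ the blue clique count is at least $\cN(K_k,G_{blue})-(\text{number of blue }K_k\text{ through }x)$, and one shows that if $x$ lies in many blue $K_k$'s and $xy$ is red and nothing near $xy$ is deficient, then $F$ appears. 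This is where the color-critical vertex is essential: $u$ having small-ish degree and $F-u$ being $r$-colourable means we can ``plug'' $F-u$ into the Turán-like bulk and attach $u$ via one red and several blue edges at $x$.

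The main obstacle I expect is the embedding/supersaturation lemma that converts ``a red edge plus a robust Turán-like blue neighbourhood'' into an actual Berge-$F$: one must control $O(1)$ exceptional vertices, ensure the bijection between hyperedges of $\cH$ and edges of $F$ can be set up (each hyperedge of size $k$ containing its assigned edge — here hyperedges come from blue $K_k$'s giving $\binom{k}{2}$ potential images, which must be allocated consistently, plus the red edges as size-$2$... wait, in the uniform setting one works with the $k$-graph directly, but via Lemma~\ref{celeb2} we are purely in the blue-red graph world, so ``Berge-$F$'' only enters implicitly through $\ex^{\mathrm{col}}$). The genuinely delicate point is quantifying the blue-clique deficit caused by each red edge and showing the deficits are essentially disjoint, so they add up; handling red edges that cluster on a bounded vertex set, and the interaction with the $O(1)$ non-typical vertices, is the technical heart. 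A secondary difficulty is making the stability statement for $\ex(n,K_k,F)$ sufficiently precise (edit distance $O(1)$ from a Turán-type graph with a bounded exceptional set), which may require reproving a robust version rather than citing an asymptotic one.
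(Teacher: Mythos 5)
Your reduction via Lemma~\ref{celeb2} and the observation that $\cN(K_k,G_{blue})\le\ex(n,K_k,F)$ are fine, but the central claim your argument rests on is false: a red edge together with a near-complete blue $r$-partite neighbourhood does \emph{not} in general yield a copy of $F$. If the red edge runs between two parts, the whole graph (colours ignored) is still essentially $r$-partite and hence $F$-free, so such a red edge is not ``paid for'' by any blue deficiency in your sense; it is suboptimal only because recolouring it blue would create many blue $K_k$'s. Even for a red edge inside a part, one extra edge does not force $F$ when $F$ merely has a colour-critical vertex rather than a colour-critical edge: for example $F=B_{r+1,1}$ (two copies of $K_{r+1}$ sharing one vertex; the shared vertex is colour-critical) is not contained in $T(n,r)$ plus a single edge, since every $K_{r+1}$ of that graph must use both endpoints of the extra edge, so two copies cannot share exactly one vertex. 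Thus the step ``red edge $+$ robust Tur\'an-like blue neighbourhood $\Rightarrow$ copy of $F$'' fails precisely in the regime the theorem addresses, and the subsequent accounting (each red edge costs at least one blue $K_k$, the deficits are essentially disjoint and sum up), which you yourself flag as the technical heart, is not supplied and cannot be repaired along these lines.

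The paper uses the colour-critical vertex quite differently: it enters only through $\sigma(F)=1$. Theorem~\ref{strucc}\textbf{(ii)} (whose proof contains the recolouring argument you would actually need) shows that in an extremal $F$-free blue-red graph every red edge has both endpoints in an exceptional set $B$ with $|B|\le rK(\sigma(F)-1)$: a red edge with an endpoint outside $B$ is eliminated because turning it blue creates at least two new blue $k$-cliques and so increases $g$, while $|B|$ is bounded via the Ma--Qiu claim. With $\sigma(F)=1$ this gives $B=\emptyset$, hence no red edges at all, hence $g(G)=\cN(K_k,G_{blue})\le\ex(n,K_k,F)$, which together with Lemma~\ref{celeb2} and the trivial lower bound $\ex(n,K_k,F)\le\ex_k(n,\textup{Berge-}F)$ finishes the proof. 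Your plan never isolates this mechanism, and without it the bounded surplus you would obtain from Theorem~\ref{konst} cannot be removed.
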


We show that Conjecture \ref{sejt} holds in the 3- and 4-uniform case. Furthermore, it holds in any uniformity if the chromatic number of $F$ is sufficiently large.

\begin{thm}\label{kisk} 
\textbf{(i)} Let $\chi(F)>k$ and $k\le 4$. Then
$\ex_k(n,\textup{Berge-}F)=\ex(n,K_k,F)$ for sufficiently large $n$.

\textbf{(i)} Let us fix $k$ and $r$ be sufficiently large. If $\chi(F)=r+1$, then $\ex_k(n,\textup{Berge-}F)=\ex(n,K_k,F)$ for sufficiently large $n$.
\end{thm}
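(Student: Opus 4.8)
The plan is to reduce everything to the coloured extremal number and to a greedily optimal configuration. By Lemma~\ref{celeb2}, and since the chain $\ex(n,K_k,F)\le\ex_k(n,\textup{Berge-}F)\le\ex^{{\mathrm col}}(n,F)$ is already known, it suffices to prove $\ex^{{\mathrm col}}(n,F)\le\ex(n,K_k,F)$ for large $n$. Fix such an $n$ and let $G$ be an $n$-vertex $F$-free blue-red graph attaining $g(G)=\ex^{{\mathrm col}}(n,F)$ which, subject to this, has the fewest red edges. If $G$ has no red edge we are done, because then $g(G)=\cN(K_k,G)\le\ex(n,K_k,F)$ as the uncoloured $G$ is $F$-free; so suppose $uv$ is a red edge. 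Recolouring $uv$ blue leaves the underlying graph unchanged, hence still $F$-free, so the choice of $G$ (first maximality of $g$, then minimality of the number of red edges) forces $\cN(K_k,G_{blue}+uv)=\cN(K_k,G_{blue})$, that is, the blue graph induced on $N_{blue}(u)\cap N_{blue}(v)$ is $K_{k-2}$-free. The companion move — delete some red edge and instead add a blue edge — shows in the same way that adding any blue edge that keeps the underlying graph $F$-free creates no new copy of $K_k$; I record this saturation-type property for later use.

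Next I would run a stability argument. From $\cN(K_k,G_{blue})\ge g(G)-\binom n2\ge\ex(n,K_k,F)-\binom n2=(1-o(1))\cN(K_k,T(n,r))$ with $r:=\chi(F)-1\ge k$, together with $F$-freeness of $G_{blue}$, the stability form of the Alon--Shikhelman theorem \cite{ALS2016} (as used in \cite{ge}) produces a partition $V=V_1\cup\dots\cup V_r$ with all but $o(n^2)$ blue edges between parts, and a set of ``typical'' vertices of size $n-o(n)$, each of blue degree $(1-1/r-o(1))n$ with only $o(n)$ blue neighbours in its own part. A red edge between two typical vertices lying in parts $V_i,V_j$ (possibly $i=j$) would have a common blue neighbourhood of size $(1-o(1))\big|\bigcup_{\ell\notin\{i,j\}}V_\ell\big|$, which is $o(n^2)$-close to a complete $(\ge r-2)$-partite graph with parts of order $n$; since $r\ge k$, such a graph still contains $K_{k-2}$ by supersaturation, contradicting the first paragraph. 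So every red edge meets an atypical vertex, whence $|E(G_{red})|=o(n^2)$ — recovering the bound of \cite{ge} — and moreover, analysing the common blue neighbourhood of a red edge with one typical endpoint forces the other endpoint $v$ to have a very small ``effective'' blue neighbourhood: $N_{blue}(v)$ meets at most $k-3$ of the parts $V_\ell$ substantially, so $v$ lies in only $o(n^{k-1})$ copies of $K_k$ of $G_{blue}$, whereas a typical vertex lies in $(1-o(1))\binom{r-1}{k-1}(n/r)^{k-1}$ of them. (If one invokes that the proof of Theorem~\ref{konst} already yields $\ex^{{\mathrm col}}(n,F)\le\ex(n,K_k,F)+O(1)$, one may instead start from $O(1)$ red edges, shortening the bookkeeping.)

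The remaining task is to squeeze $|E(G_{red})|$ down to $0$, and this is where the two hypotheses enter. Writing $x$ for the number of vertices incident to red edges, each such vertex carries a deficit of order $n^{k-1}$ in $\cN(K_k,G_{blue})$ relative to its typical value; deleting these $x$ vertices gives $\cN(K_k,G_{blue})\le\ex(n-x,K_k,F)+x\cdot o(n^{k-1})\cdot n^{?}$-type bounds, and comparing with $\ex(n,K_k,F)\ge\cN(K_k,T(n,r))$ and $\ex(n-x,K_k,F)=\cN(K_k,T(n-x,r))+o(n^k)$ shows that $x\ge\varepsilon n$ would force $\ex(n,K_k,F)-\cN(K_k,G_{blue})\gg xn\ge|E(G_{red})|$, contradicting $g(G)=\cN(K_k,G_{blue})+|E(G_{red})|\ge\ex(n,K_k,F)$; hence $x=o(n)$. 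For part~(i), $k\le4$, the point is that $K_{k-2}$ is a single vertex ($k=3$) or a single edge ($k=4$), so the first paragraph says that the endpoints of every red edge have a common blue neighbourhood that is blue-\emph{empty}, respectively blue-\emph{independent} — an extremely rigid condition in a graph $o(n^2)$-close to $T(n,r)$, forcing this common neighbourhood into one part up to $O(1)$ vertices. This rigidity lets me cut $x$ down from $o(n)$ to $O(1)$, and then the saturation property pins the underlying graph off the exceptional set to a canonical near-extremal $F$-free graph, so rerouting each red edge into a blue edge of the $r$-partition strictly increases $g$ — a contradiction, so $G$ has no red edge. For part~(ii), $r$ large in terms of $k$, the same framework applies, now with the extra leverage that a vertex incident to a red edge has blue degree $\le(\tfrac12+o(1))n$ (its blue neighbourhood misses $\ge(r-k)/2$ parts) while a typical vertex has blue degree $(1-1/r-o(1))n$; since $\binom{r-1}{k-1}/r^{k-1}\to 1/(k-1)!$ as $r\to\infty$, the per-vertex deficit is $\ge c_k n^{k-1}$ for a constant $c_k>0$ independent of $r$, which again reduces matters to $O(1)$ exceptional vertices and a finite exchange.

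The hard part in both cases is this last reduction: stability alone pins the blue graph down only to within $o(n^2)$ edges, so the red edges can a priori hide among the $o(n)$ atypical vertices, and genuinely new rigidity is needed to get all the way to $0$. For $k\le4$ it is supplied by the triviality of $K_{k-2}$ (common neighbourhoods of red-edge endpoints carry no blue edges at all); for $r$ large it is supplied by the clean order-$n^{k-1}$ deficit per red-edge vertex with an $r$-independent constant. Neither lever survives when $5\le k$ and $\chi(F)-1$ is small — a $K_{k-2}$-free graph can then be dense, so red edges need not degenerate their endpoints and the exceptional set need not be bounded — which is exactly the regime left open in Conjecture~\ref{sejt}. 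A secondary technical nuisance throughout is that $\ex(n,K_k,F)$ is known only up to $o(n^k)$, so the quantitative comparisons must be phrased as a dichotomy ($x=0$ or $x\ge\varepsilon n$), with the intermediate range $1\le x<\varepsilon n$ cleared by the structural exchange rather than by counting.
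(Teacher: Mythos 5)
Your opening reduction (via Lemma~\ref{celeb2}, pass to an extremal blue-red graph, and observe that recolouring a red edge $uv$ blue must create no new blue $K_k$, so $N_{blue}(u)\cap N_{blue}(v)$ carries no blue $K_{k-2}$) matches the spirit of the paper's argument, and the stability set-up is fine. But the core of the proof is missing, and the step you lean on is false as stated. You claim that the non-typical endpoint $v$ of a red edge lies in only $o(n^{k-1})$ blue copies of $K_k$. That is not true: already for $k=4$, the rigidity you correctly extract (the common blue neighbourhood lies essentially in one part, so in every other part the neighbourhoods of $u$ and $v$ are essentially disjoint) only forces one endpoint to have about half the vertices of the remaining parts as neighbours; such a vertex can still lie in $\Theta(n^{3})$ blue $K_4$'s, roughly up to $\binom{r-1}{3}\bigl(\tfrac{n}{2r}\bigr)^{3}+\binom{r-1}{2}\bigl(\tfrac{n}{2r}\bigr)^{2}\cdot\tfrac{n}{r}$. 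The entire difficulty — and the reason the theorem is restricted to $k\le 4$ or $r$ large — is a constant-factor comparison: one must show this maximum possible clique count at a red-edge endpoint (bounded via Zykov's theorem $\ex(m,K_s,K_t)=\cN(K_s,T(m,t-1))$ applied to the split neighbourhood, yielding the paper's inequality~(\ref{equ})) falls strictly below a minimum-degree-type bound $(1+o(1))\binom{r-1}{k-1}(n/r)^{k-1}$ valid for \emph{every} vertex of the extremal coloured graph. You never establish such a minimum-degree statement (in the paper this is Theorem~\ref{strucc}(i), proved by Zykov-type symmetrization, not by stability — stability only controls typical vertices), and you never perform the comparison; note also that the inequality genuinely fails for some pairs such as $k=r=5$, so no argument ignoring this computation can succeed.

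The surrounding scaffolding does not repair this. The plan to delete the $x$ red-incident vertices and compare $\cN(K_k,G_{blue})$ with $\ex(n-x,K_k,F)$ cannot be made quantitative, because $\ex(m,K_k,F)$ is only known up to $o(m^k)$, so it gives nothing at the precision $O(xn^{k-1})$ you need; and the concluding moves (``rigidity lets me cut $x$ down to $O(1)$'', ``the saturation property pins the underlying graph'', ``a finite exchange'') are assertions rather than arguments — the paper instead shows directly that a red edge forces a vertex violating the min-degree bound, so no red edge exists at all, and it also needs the structural fact (Theorem~\ref{strucc}(ii)) that red edges live inside a bounded set $B$ so that red degrees do not interfere with the clique count at the relevant vertex. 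For $k=3$ your ``blue-empty common neighbourhood'' observation can indeed be completed along the paper's lines, but for $k=4$ and for part~(ii) the missing constant-factor counting is exactly the content of the theorem, so as it stands the proposal has a genuine gap.
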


Recall that if $\chi(F)>k$, then the asymptotics of $\ex(n,K_k,F)$ is known, thus the asymptotics of $\ex_k(n,\textup{Berge-}F)$ is known.
Even if Conjecture \ref{sejt} is true, it only improves the asymptotic result to an exact result in the few cases when $\ex(n,K_k,F)$ is known. Besides the case where $F$ has a color-critical edge, we are aware only of the following results. Let $2K_{r+1}$ denote two vertex-disjoint copies of $K_{r+1}$ and $B_{r,1}$ denote two copies of $K_{r+1}$ sharing exactly one vertex. Gerbner and Patk\'os \cite{gerpat} determined $\ex(n,K_k,2K_{r+1})$ and $\ex(n,K_k,B_{r+1,1})$. The first of these results was extended by Gerbner \cite{ge2} to $\ex(n,K_k,F)$ in the case each component of $F$ either has chromatic number $r+1$ and contains a color-critical edge, or has chromatic number at most $r$. Gerbner \cite{ge2} also determined $\ex(n,K_k,Q_{r+1})$ for a class of graphs $Q_r$ that we do not define here and most values of $k$.

For the Berge copies of the graphs mentioned above, we can show that Conjecture \ref{sejt} holds. In fact, $B_{r+1,1}$ and $Q_r$ each has a color-critical vertex, thus we already dealt with them in Theorem \ref{cc}. Let $K_i+T(n-i,r)$ denote the graph we obtain by adding $i$ vertices to $T(n-i,r)$ and joining them to every vertex. 
%Let $T^+(n,r)$ denote the graph we obtain from $T(n,r)$ by adding an edge to one of the smaller parts.

\begin{thm}\label{blab}
Let us assume that $F$ consists of $s$ components with chromatic number $r+1$, each with a color-critical edge, and any number of components with chromatic number at most $r$. Then $\ex_k(n,\textup{ Berge-}F)=\cN(K_k,K_{s-1}+T(n-s+1,r))$.
\end{thm}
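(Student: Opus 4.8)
The plan is to establish the matching upper and lower bounds separately, with the lower bound being essentially immediate and the upper bound requiring the bulk of the work. For the lower bound, the graph $K_{s-1}+T(n-s+1,r)$ is $F$-free in the ordinary sense: any copy of $F$ would need to place its $s$ components with chromatic number $r+1$ into the graph, but after using up all $s-1$ apex vertices, at most one such component can survive inside a copy of $T(m,r)$ (which is $K_{r+1}$-free), and each of those components has a color-critical edge so it does not embed into the $r$-partite part even with one extra apex vertex available only if... — more carefully, one checks as in \cite{gerpat,ge2} that $s-1$ apexes are not enough. Taking each hyperedge to be a $K_k$ together with arbitrary padding vertices gives a Berge-$F$-free $k$-uniform hypergraph with $\cN(K_k,K_{s-1}+T(n-s+1,r))$ edges, since a Berge-$F$ would yield a (graph) copy of $F$ in the shadow.

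For the upper bound, I would combine Theorem~\ref{konst} with the generalized Tur\'an result of Gerbner \cite{ge2}. By \cite{ge2}, $\ex(n,K_k,F)=\cN(K_k,K_{s-1}+T(n-s+1,r))$ for $n$ large, so Theorem~\ref{konst} already gives $\ex_k(n,\textup{Berge-}F)=\cN(K_k,K_{s-1}+T(n-s+1,r))+O(1)$. The task is to remove the $O(1)$. Here I would follow the strategy used to prove Theorem~\ref{cc} (and the color-critical-edge case): take a $k$-uniform Berge-$F$-free hypergraph $\cH$ with the maximum number of hyperedges, pass to the blue-red graph $G$ furnished by Lemma~\ref{celeb2} where blue edges carry $K_k$'s and red edges are ``leftover'' hyperedges, and argue via a stability/supersaturation analysis that $G_{blue}$ must be exactly (a subgraph of) $K_{s-1}+T(n-s+1,r)$ and that there are no red edges at all. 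The point is that any red edge, or any blue edge outside the extremal configuration, can be used — together with the many $K_k$'s guaranteed by supersaturation once $G$ is close to extremal — to build a Berge-$F$: one finds the $s-1$ ``dense spots'', embeds $s-1$ of the color-critical components using apex-like vertices, embeds the remaining $(r+1)$-chromatic component and the low-chromatic components greedily in the rest, and crucially uses distinct hyperedges for distinct edges of $F$, where the extra red edge or extra blue edge provides the hyperedge that would otherwise be missing.

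The main obstacle is the bookkeeping in this last step: showing that a hypergraph which is extremal (not merely near-extremal) has \emph{no} red edges and its blue part is exactly $K_{s-1}+T(n-s+1,r)$. Concretely, one needs a robust embedding lemma: if $G$ is $F$-free, has $\cN(K_k,G_{blue})+|E(G_{red})|\ge \cN(K_k,K_{s-1}+T(n-s+1,r))$, and contains one ``spare'' edge (red, or blue but not in a cleanest copy of the extremal graph), then $\cH$ contains a Berge-$F$. This requires the stability theorem for $\ex(n,K_k,F)$ in this regime — namely that $F$-free graphs with nearly $\cN(K_k,K_{s-1}+T(n-s+1,r))$ copies of $K_k$ are structurally close to $K_{s-1}+T(n-s+1,r)$ — together with a symmetrization/progressive-induction argument to upgrade ``close'' to ``equal'' while tracking the Berge bijection. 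I expect the components with chromatic number at most $r$ to be harmless (they embed in the Tur\'an part with room to spare), so the heart of the matter is the interaction between the $s$ color-critical components and the $s-1$ apex vertices, exactly as in the proof of Theorem~\ref{blab}'s generalized Tur\'an analogue in \cite{ge2}, now carried out at the level of hyperedges rather than edges.
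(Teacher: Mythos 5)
Your overall framework --- lower bound from $\ex(n,K_k,F)=\cN(K_k,K_{s-1}+T(n-s+1,r))$ (known from \cite{ge2}) plus the clique-hypergraph construction, and upper bound via the reduction of Lemma~\ref{celeb2} to an extremal blue-red $F$-free graph --- is the same as the paper's. But the proposal stops exactly where the actual work begins. You write that one ``needs a robust embedding lemma'' and a ``symmetrization/progressive-induction argument'' to show that the extremal colored graph has no red edges and that its blue part sits inside $K_{s-1}+T(n-s+1,r)$, and you do not supply either; this step is the entire content of the theorem beyond what is already known, so as written there is a genuine gap rather than a proof. The paper closes it with Theorem~\ref{strucc}(ii): in an extremal blue-red $F$-free graph, all red edges lie inside a bounded set $B$ whose vertices have $\Omega(n)$ neighbours in every part. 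If there were $s$ independent edges inside parts, each with an endpoint outside $B$, the $\Omega(n)$ common neighbourhoods of their endpoints let one greedily embed the $s$ $(r+1)$-chromatic components (these edges playing the role of the color-critical edges) together with the low-chromatic components, contradicting $F$-freeness; this forces $|B|\le s-1$ and confines the in-part edges outside $B$ to a structure contributing only $o(n^{k-1})$ cliques. Maximality then forces $|B|=s-1$ (otherwise adding an apex vertex gains $\Theta(n^{k-1})$ blue $k$-cliques) and no in-part edges outside $B$, i.e.\ $G\subseteq K_{s-1}+T(n-s+1,r)$; and any red edge could be recolored blue for a net gain, so there are none. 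None of this is routine bookkeeping, and no Ma--Qiu-style ``close implies equal'' upgrade in the form you describe is available off the shelf for this $F$.

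A secondary misconception: your plan to carry out the embedding ``at the level of hyperedges rather than edges'' and to ``track the Berge bijection'' is unnecessary once Lemma~\ref{celeb2} is invoked --- after that reduction one only ever builds ordinary copies of $F$ inside the $F$-free graph $G$, never Berge copies inside $\cH$, which is precisely what makes the paper's argument clean. Relatedly, Theorem~\ref{konst} plays no role here and cannot be bootstrapped to remove the $O(1)$; and in the lower-bound construction no ``padding vertices'' are involved: the hyperedges are exactly the vertex sets of the $k$-cliques of $K_{s-1}+T(n-s+1,r)$, and a Berge-$F$ in that hypergraph would force a copy of $F$ in the ($F$-free) graph because every core edge lies inside a clique hyperedge.
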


%We prove the following statements on the structure of the extremal graphs. vagy: 
To prove the above theorems, we use the following results on the structure of the extremal graphs that are interesting on their own. Let us denote by $\sigma(F)$ the smallest possible order of a color class in a $\chi(F)$-coloring of $F$.

\begin{thm}\label{strucc} Let $\chi(F)=r+1>k$ and $G$ be an $n$-vertex $F$-free blue-red graph with $g(G)=\ex^{{\mathrm col}}(n,F)$. Then the followings hold.

 \textbf{(i)} For every vertex $u$ of $G$ we have that the number of blue $k$-cliques plus the number of red edges containing $u$ is at least $(1+o(1))\binom{r-1}{k-1}(\frac{n}{r})^{k-1}$.

  \textbf{(ii)}  Let $\varepsilon>0$ be sufficiently small. Then there exist an $r$-partition of $V(G)$ to $A_1,\dots, A_r$, a constant $K=K(F,\varepsilon)$ and a set $B$ of at most $rK(\sigma(F)-1)$ vertices such that the followings hold. For each $i$ we have$|A_i|=(1-o(1))n/r$, each red edge is between two elements of $B$, every vertex of $B$ is adjacent to 
  at least $\varepsilon n$ vertices in each part and to at least $cn$ vertices in all but one parts for some constant $c=c(F)$. Furthermore, every vertex of $A_i\setminus B$ is adjacent to at most $\varepsilon n$ vertices in $A_i$ and all but at most $\varepsilon (2r^k+1) n$ vertices in $A_j$ with $j\neq i$.

 \textbf{(iii)} Let $\cH$ be an $n$-vertex $k$-uniform Berge-$F$-free hypergraph with $\ex_k(n,\textup{Berge-}F)$ hyperedges. Then every vertex of $\cH$ is contained in at least $(1+o(1))\binom{r-1}{k-1}(\frac{n}{r})^{k-1}$ hyperedges.

%\textbf{(iv)}? 
\end{thm}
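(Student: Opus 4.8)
The plan is to treat the three parts together, since they share a common engine: the supersaturation / stability machinery for $\ex(n,K_k,F)$ and $\ex^{\mathrm{col}}(n,F)$, combined with the observation that removing a vertex must not drop the objective function by more than the average contribution. Let me write $r+1=\chi(F)$ and recall that by Alon--Shikhelman $\ex(n,K_k,F)=(1+o(1))\binom{r}{k}(n/r)^k$ and, by the sandwich $\ex(n,K_k,F)\le \ex_k(n,\textup{Berge-}F)\le \ex^{\mathrm{col}}(n,F)\le \ex(n,K_k,F)+\ex(n,F)$ together with $\ex(n,F)=O(n^2)=o(n^k)$, all three of $\ex^{\mathrm{col}}(n,F)$, $\ex_k(n,\textup{Berge-}F)$, $\ex(n,K_k,F)$ have the same asymptotics $(1+o(1))\binom{r}{k}(n/r)^k$.

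For part (i): let $G$ be extremal for $g$ on $n$ vertices and let $u\in V(G)$. Deleting $u$ yields an $F$-free blue-red graph on $n-1$ vertices, so $g(G-u)\le \ex^{\mathrm{col}}(n-1,F)$. The quantity $g(G)-g(G-u)$ is exactly the number of blue $k$-cliques through $u$ plus the number of red edges at $u$. Hence that local count is at least $\ex^{\mathrm{col}}(n,F)-\ex^{\mathrm{col}}(n-1,F)$. Now I need a lower bound on this difference of the extremal function. Using the asymptotic value $(1+o(1))\binom{r}{k}(n/r)^k$ alone only gives a difference of $(1+o(1))\binom{r}{k}k(n/r)^{k-1}/r=(1+o(1))\binom{r-1}{k-1}(n/r)^{k-1}$ \emph{on average}, which is what we want, but a crude convexity argument over a single step is not automatic. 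The clean route is instead: pick the vertex $u$ of minimum local count $d(u)$; then $g(G-u)=g(G)-d(u)\ge \ex^{\mathrm{col}}(n-1,F)\ge \ex^{\mathrm{col}}(n,F)-\big(\text{max local count}\big)$ is not quite it either. So the honest argument is to iterate: if some vertex had local count at most $(1-\delta)\binom{r-1}{k-1}(n/r)^{k-1}$ then we could delete it and, comparing $\ex^{\mathrm{col}}(n,F)$ with $\ex^{\mathrm{col}}(n-1,F)$ via their exact-up-to-$o(n^{k-1})$ asymptotics (which I would first need to upgrade: the Alon--Shikhelman estimate gives the leading term, and $\ex^{\mathrm{col}}$ differs from $\ex(n,K_k,F)$ by only $O(n^2)$), derive $g(G-u)>\ex^{\mathrm{col}}(n-1,F)$, a contradiction. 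This requires $\ex^{\mathrm{col}}(n,F)-\ex^{\mathrm{col}}(n-1,F)\ge (1-o(1))\binom{r-1}{k-1}(n/r)^{k-1}$, which I would get from a stability/Zykov-symmetrization description of the near-extremal $G$ (essentially a blow-up of $K_r$ with $o(n^2)$ extra edges) rather than from the asymptotics alone. Part (iii) is then immediate from part (i) applied to the blue-red graph obtained from a Berge-$F$-free hypergraph $\cH$ via Lemma \ref{celeb2}'s proof: each hyperedge is assigned either to a blue $k$-clique or to a red edge, so a vertex in few hyperedges would be in few blue $k$-cliques plus red edges, contradicting (i) — modulo checking that the assignment from \cite{fkl}/\cite{gmp} is local enough, i.e.\ that the number of hyperedges through a vertex is at most (a bounded multiple of) the corresponding blue-red count; if the map is injective hyperedge $\to$ (clique $\cup$ edge) this is clear, and I would spell that out.

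For part (ii): this is the real work and the main obstacle. Start from the structural fact that $G$ is $o(n^2)$-close in edit distance to a complete $r$-partite graph (stability for $\ex(n,K_k,F)$, e.g.\ via the removal lemma or via the Erdős–Simonovits method adapted to counting $K_k$'s, which is available since $\chi(F)=r+1$). Fix the $r$-partition $A_1,\dots,A_r$ minimizing the number of "bad" pairs. Call a vertex $v$ \emph{typical} if it has at most $\varepsilon n$ neighbours in its own part and misses at most $\varepsilon(2r^k+1)n$ vertices in each other part; a cleaning argument (any vertex violating this by a large margin contributes, after accounting, fewer than $\binom{r-1}{k-1}(n/r)^{k-1}$ blue $k$-cliques-plus-red-edges, contradicting (i), or can be moved to reduce bad pairs) shows all but $O(1)$ vertices are typical — here the constant $K(F,\varepsilon)$ enters, and one must be careful that "all but $O(1)$" and not merely "all but $o(n)$" holds, which is exactly where the color-critical-edge-free supersaturation bound $\ex(n,F)=O(n^2)$ and a second round of the deletion argument from (i) are needed. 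Put the non-typical vertices and the endpoints of red edges into $B$. To bound $|B|$ by $rK(\sigma(F)-1)$: a red edge can only sit inside $B$ because a red edge with a typical endpoint, together with the near-complete $r$-partite blue structure, would build a Berge-$F$ (embed $F$ using a smallest color class of size $\sigma(F)$ on one side); and if $B$ contained $\sigma(F)$ vertices all "fully joined" across $r-1$ parts one could again embed $F$, forcing $|B|< r\cdot(\text{bound})\cdot\sigma(F)$. The adjacency conditions on $B$-vertices ($\ge\varepsilon n$ in every part, $\ge cn$ in all but one part) come from the same embedding obstruction run in the contrapositive: a $B$-vertex adjacent to few vertices in two different parts could be deleted without losing more than the average, again contradicting (i). I expect the delicate points to be (a) getting the additive $O(1)$ rather than $o(n)$ control on $|B|$, which forces me to re-run the minimum-degree deletion argument after the partition is fixed, and (b) the bookkeeping in the Berge embedding lemma that turns "many typical vertices densely connected across $r-1$ parts plus one more edge" into an actual Berge-$F$, using distinct core vertices for $F$ and distinct padding vertices for the $k-2$-element extensions of each edge — this is where uniformity $k$ and the quantity $\sigma(F)$ both show up and must be tracked precisely.
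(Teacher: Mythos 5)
There is a genuine gap, and it sits at the heart of the argument. For part (i) your plan reduces to the inequality $\ex^{\mathrm col}(n,F)-\ex^{\mathrm col}(n-1,F)\ge(1-o(1))\binom{r-1}{k-1}(n/r)^{k-1}$, which, as you yourself note, cannot be extracted from the asymptotics (the multiplicative $o(1)$ error is of order $o(n^k)$, swamping an $n^{k-1}$ difference), and it does not follow from stability either: edit distance $o(n^2)$ to $T(n,r)$ gives no per-vertex control. Saying you would get it ``from a stability/Zykov-symmetrization description'' is circular, because a local lower bound of exactly this kind is what (i) asserts. The missing device, which the paper uses, is to symmetrize \emph{inside the fixed extremal $G$}: take the Ma--Qiu partition of $G_{blue}$, choose a set $S$ of $|V(F)|$ vertices in one part whose total loss under the stability approximation is $o(n^{k-1})$ (by averaging), and, if some vertex $u$ is deficient, delete $u$'s edges and join $u$ in blue (resp.\ red) to the common blue (resp.\ red) neighbourhood of $S$. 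The point of taking $|S|=|V(F)|$ is that any copy of $F$ through the new $u$ has a free vertex of $S$ to swap in, so $F$-freeness is preserved, while $g$ strictly increases --- contradiction with extremality. Your part (iii) has a second independent flaw: the blue-red graph produced from an extremal Berge-$F$-free $\cH$ by Lemma~\ref{celeb2} satisfies only $g\ge\ex_k(n,\textup{Berge-}F)$, not $g=\ex^{\mathrm col}(n,F)$, so (i) does not apply to it; and even if it did, blue $k$-cliques at a vertex $v$ of that graph need not be images of hyperedges containing $v$. The paper instead runs the same $S$-symmetrization directly on $\cH$: remove all hyperedges at $u$ and add $(H\setminus\{v\})\cup\{u\}$ for every hyperedge $H$ meeting $S$ in exactly one vertex $v$.

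For part (ii) your sketch also misidentifies the key mechanism. In the colored-graph setting $F$-freeness is colorblind, so you cannot rule out a red edge at a typical vertex by ``building a Berge-$F$'' or embedding $F$ through it --- if such an embedding existed, $G$ would not be $F$-free in the first place. The actual reason red edges live inside $B$ is an exchange argument: recoloring a red edge with an endpoint outside $B$ to blue creates (greedily, picking common neighbours part by part, using the degree conditions) at least two new blue $k$-cliques while destroying one red edge, so $g$ would increase. Similarly, the bound $|B|\le rK(\sigma(F)-1)$ is not something you need to re-derive by hand: the paper defines $B$ as the vertices with at least $\varepsilon n$ neighbours in their own part and quotes the corresponding claim of Ma--Qiu, whose proof only uses that the graph has $\ex(n,K_k,F)-o(n^k)$ copies of $K_k$; and the statement that every vertex of $A_i\setminus B$ misses at most $\varepsilon(2r^k+1)n$ vertices of each other part is proved by showing the exceptional set $U_i$ is \emph{empty} (again an exchange argument using extremality of $g$), not by absorbing such vertices into $B$, which would wreck both the size bound and the degree properties required of $B$.
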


\section{Proofs}

We will use the following stability result due to Ma and Qiu \cite{mq}.

\begin{thm}[Ma, Qiu \cite{mq}]\label{maqi}
    Let $\chi(F)>k$ and let $G$ be an $n$-vertex $F$-free graph that contains $\ex(n,K_k,F)-o(n^k)$ copies of $K_k$. Then $G$ can be turned into $T(n,r)$ by adding and removing $o(n^2)$ edges.
\end{thm}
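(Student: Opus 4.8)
The plan is to prove this by the Szemer\'edi regularity method, following the standard template for stability in generalized Tur\'an theory; throughout, $r=\chi(F)-1$, and note $r\ge k$ since $\chi(F)>k$. First I would apply the regularity lemma to $G$ with a small parameter $\varepsilon>0$, obtaining clusters $V_1,\dots,V_m$ with $1/\varepsilon\le m\le M(\varepsilon)$ and an exceptional set of size at most $\varepsilon n$, and form the reduced graph $R$ on $[m]$ whose edges are the $\varepsilon$-regular pairs of density at least $\sqrt\varepsilon$, recording the density $d_{ij}\in[0,1]$ of each such pair. The crucial structural fact is that $R$ is $K_{r+1}$-free: a $K_{r+1}$ in $R$ would, by the counting lemma, produce in $G$ a copy of the complete $(r+1)$-partite graph with all parts of size $|V(F)|$, and since $\chi(F)=r+1$ this subgraph contains $F$, contradicting that $G$ is $F$-free. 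In particular $\cN(K_k,R)\le\cN(K_k,T(m,r))$ by Zykov's theorem.

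Next I would convert the hypothesis on $\cN(K_k,G)$ into information about $R$. Apart from $O(\sqrt\varepsilon)\,n^k$ copies --- those meeting the exceptional set ($\le\varepsilon n^k$), those with two vertices in a common cluster ($\le n^k/m\le\varepsilon n^k$), and those using an irregular or sparse pair ($O(\sqrt\varepsilon)\,n^k$) --- every copy of $K_k$ in $G$ is transversal to $k$ clusters forming a $K_k$ in $R$, and the number transversal to $V_{i_1},\dots,V_{i_k}$ is $(1+o(1))\prod_{a<b}d_{i_ai_b}\,(n/m)^k$. Hence
\[
\cN(K_k,G)\ \le\ \frac{n^k}{m^k}\sum_{K_k\subseteq R}\ \prod_{a<b}d_{i_ai_b}\ +\ O(\sqrt\varepsilon)\,n^k\ \le\ \frac{n^k}{m^k}\,\cN(K_k,R)+O(\sqrt\varepsilon)\,n^k .
\]
On the other hand $T(n,r)$ is $F$-free, so $\ex(n,K_k,F)\ge\cN(K_k,T(n,r))=(1+o(1))\binom rk(n/r)^k$, and $\cN(K_k,T(m,r))=(1+o_m(1))\binom rk(m/r)^k$. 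Feeding $\cN(K_k,G)\ge\ex(n,K_k,F)-o(n^k)$ into the display and combining with $\cN(K_k,R)\le\cN(K_k,T(m,r))$ yields, for a quantity $\delta=\delta(\varepsilon)\to0$ as $\varepsilon\to0$, both
\[
\text{(a)}\quad\cN(K_k,R)\ge\cN(K_k,T(m,r))-\delta m^k,\qquad\text{(b)}\quad\sum_{K_k\subseteq R}\Bigl(1-\prod_{a<b}d_{i_ai_b}\Bigr)\le\delta m^k .
\]

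The heart of the argument is a stability statement for $K_{r+1}$-free graphs relative to the $K_k$-count. From (a), and the fact (Zykov) that $T(m,r)$ is the \emph{unique} maximizer of $\cN(K_k,\cdot)$ over $K_{r+1}$-free graphs on $m$ vertices, the Erd\H{o}s--Simonovits stability method --- reduced to the classical $K_{r+1}$-stability via supersaturation --- gives a partition $U_1,\dots,U_r$ of $[m]$, with each $|U_a|=(1+o(1))m/r$, such that $R$ becomes $r$-partite with these parts after deleting $o(m^2)$ edges. Then I would use (b) to pin down the densities: a \emph{typical} cross-pair (one whose two endpoints are each joined in $R$ to all but $o(m)$ vertices of every other part) lies in $\Omega(m^{k-2})$ copies of $K_k$ in $R$, using $r\ge k$; if $\beta m^2$ cross-pairs had density at most $1-\eta$, these would force $\sum_{K_k\subseteq R}(1-\prod d_{i_ai_b})\ge c\,\eta\beta\,m^k$, so by (b) we get $\beta=O(\delta/\eta)$. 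Consequently all but $o(m^2)$ cross-pairs $ij$ satisfy $d_{ij}\ge1-o(1)$. Finally I would pull the picture back to $G$: putting $W_a=\bigcup_{i\in U_a}V_i$ and absorbing the exceptional set into $W_1$, we have $|W_a|=(1+o(1))n/r$; the number of edges by which $G$ differs from the complete $r$-partite graph on $W_1,\dots,W_r$ is at most the within-cluster edges ($\le n^2/m$), the within-part cluster pairs that are edges of $R$ ($o(m^2)$ pairs, $o(n^2)$ edges), the sparse/irregular pairs and the exceptional set ($O(\sqrt\varepsilon)\,n^2$), and the cross-part non-edges, which come from the $o(m^2)$ atypical cross-pairs and the $o(1)$-deficiency of the remaining ones --- altogether $o(n^2)$ once $\varepsilon\to0$ slowly as $n\to\infty$. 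Since the complete $r$-partite graph on these nearly balanced parts is $o(n^2)$-close to $T(n,r)$, this finishes the proof.

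I expect the main obstacle to be the weighted stability step: passing cleanly from ``$R$ is $o(m^2)$-close to $T(m,r)$ together with $\sum_{K_k\subseteq R}(1-\prod d_{i_ai_b})$ small'' to ``almost all cross-pair densities are $1-o(1)$'', and --- related --- organizing the parameter hierarchy $\varepsilon\gg1/m\gg1/n$ so that every $o(\cdot)$ term really vanishes. A secondary point is the $K_{r+1}$-stability with respect to $\cN(K_k,\cdot)$ rather than edge count, but this follows from the classical edge-count stability by a routine supersaturation argument.
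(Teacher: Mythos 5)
The paper does not prove this statement at all---it is imported from Ma and Qiu \cite{mq}---so there is no internal proof to compare against. Taken on its own terms, your regularity-method route is viable, and most of the steps are sound: the reduced graph $R$ is $K_{r+1}$-free by the embedding lemma since $F\subseteq K_{r+1}(|V(F)|)$, the transfer of the hypothesis into the two conditions (a) and (b), the double-counting argument showing almost all cross-pairs have density $1-o(1)$, and the final pull-back to $G$ are all standard and correct modulo the usual parameter bookkeeping.

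The genuine gap is the step you label ``secondary'': the claim that a $K_{r+1}$-free graph $R$ on $m$ vertices with $\cN(K_k,R)\ge \cN(K_k,T(m,r))-o(m^k)$ must be $o(m^2)$-close to $T(m,r)$, which you say ``follows from the classical edge-count stability by a routine supersaturation argument.'' Supersaturation runs in the wrong direction: it lower-bounds the number of $K_{r+1}$'s in graphs whose edge count \emph{exceeds} $\ex(m,K_{r+1})$, and it gives no mechanism for converting ``many $K_k$'s'' into ``almost extremal edge count.'' Plain Kruskal--Katona does not close this hole either: from $e(R)\le(1-1/r)m^2/2-\gamma m^2$ it only yields $\cN(K_k,R)\le(1+o(1))\bigl((1-1/r)-2\gamma\bigr)^{k/2}m^k/k!$, and for $k\ge 3$ the value at $\gamma=0$ already exceeds $\cN(K_k,T(m,r))\approx\prod_{i=1}^{k-1}(1-i/r)\,m^k/k!$, so a $K_k$-count near the maximum does not force a near-extremal edge count by that route. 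What you actually need is an inequality exploiting $K_{r+1}$-freeness, e.g.\ the Fisher--Ryan (or Frohmader) Kruskal--Katona-type bound $\cN(K_k,R)\le\binom{r}{k}\bigl(e(R)/\binom{r}{2}\bigr)^{k/2}$ valid for $K_{r+1}$-free graphs; with it, your hypothesis does give $e(R)\ge\ex(m,K_{r+1})-o(m^2)$, and then Erd\H{o}s--Simonovits stability yields the partition $U_1,\dots,U_r$ as you use it. Alternatively, cite a clique-count stability theorem for the case $F=K_{r+1}$ directly---but note the near-circularity: that special case is precisely the theorem you are proving with $F=K_{r+1}$, so it must be supplied with an independent proof or reference rather than waved through. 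With that one ingredient made precise, the rest of your argument goes through.
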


Let us start with the proof of Theorem \ref{strucc}, that we restate here for convenience.

\begin{thm*}  Let $\chi(F)=r+1>k$ and $G$ be an $n$-vertex $F$-free blue-red graph with $g(G)=\ex^{{\mathrm col}}(n,F)$. Then the followings hold.

 \textbf{(i)} For every vertex $u$ of $G$ we have that the number of blue $k$-cliques plus the number of red edges containing $u$ is at least $(1+o(1))\binom{r-1}{k-1}(\frac{n}{r})^{k-1}$.

  \textbf{(ii)}  Let $\varepsilon>0$ be sufficiently small. Then there exist an $r$-partition of $V(G)$ to $A_1,\dots, A_r$, a constant $K=K(F,\varepsilon)$ and a set $B$ of at most $rK(\sigma(F)-1)$ vertices such that the followings hold. For each $i$ we have$|A_i|=(1-o(1))n/r$, each red edge is between two elements of $B$, every vertex of $B$ is adjacent to 
  at least $\varepsilon n$ vertices in each part and to at least $cn$ vertices in all but one parts for some constant $c=c(F)$. Furthermore, every vertex of $A_i\setminus B$ is adjacent to at most $\varepsilon n$ vertices in $A_i$ and all but at most $\varepsilon (2r^k+1) n$ vertices in $A_j$ with $j\neq i$.

 \textbf{(iii)} Let $\cH$ be an $n$-vertex $k$-uniform Berge-$F$-free hypergraph with $\ex_k(n,\textup{Berge-}F)$ hyperedges. Then every vertex of $\cH$ is contained in at least $(1+o(1))\binom{r-1}{k-1}(\frac{n}{r})^{k-1}$ hyperedges.
\end{thm*}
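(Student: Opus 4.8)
The plan is to handle all three parts by one device: delete a vertex that participates in too few ``rich'' configurations, re-insert it joined to a robust Tur\'an-like part, and contradict extremality. First I record the order of magnitude. Colouring $T(n,r)$ entirely blue gives an $F$-free blue-red graph with $g=\cN(K_k,T(n,r))$ (it is $F$-free since $\chi(F)>r$), and any $F$-free blue-red $G$ has $g(G)=\cN(K_k,G_{blue})+|E(G_{red})|\le\ex(n,K_k,F)+\ex(n,F)=(1+o(1))\cN(K_k,T(n,r))$ because $\ex(n,F)=O(n^2)$ and by the theorem of Alon--Shikhelman; hence $\ex^{{\mathrm col}}(n,F)=(1+o(1))\cN(K_k,T(n,r))$, and an extremal $G$ has $\cN(K_k,G_{blue})=\ex(n,K_k,F)-o(n^k)$, so Theorem~\ref{maqi} applies to $G_{blue}$. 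Choosing $\varepsilon_n\to0$ slowly, this yields an $r$-partition $V(G)=A_1\cup\dots\cup A_r$ with $|A_i|=(1-o(1))n/r$ and a set $T=\bigcup_iT_i$ of $(1-o(1))n$ \emph{typical} vertices, each $v\in T_i$ blue-adjacent to all but $o(n)$ vertices of each $A_j$ with $j\ne i$ and to at most $o(n)$ vertices of $A_i$; in particular $T$ spans an almost complete blue $r$-partite graph. For part (iii) I additionally run the proof of Lemma~\ref{celeb2}: the blue-red graph it builds from an extremal $\cH$ has $g\ge|E(\cH)|=\ex_k(n,\textup{Berge-}F)=(1+o(1))\cN(K_k,T(n,r))$, hence is near-extremal and its blue part is again $o(n^2)$-close to $T(n,r)$; since a $k$-uniform hyperedge that contains its assigned blue $k$-clique \emph{equals} it, all but $o(n^k)$ hyperedges of $\cH$ are transversal $k$-cliques of the clean partition on typical vertices, and by averaging all but $o(n)$ vertices are \emph{good}, i.e.\ lie in $(1-o(1))\binom{r-1}{k-1}(n/r)^{k-1}$ such hyperedges (and all but $o(n)$ are even \emph{super-good}: robustly many of those hyperedges pass through any fixed typical vertex together with them).

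For (i): suppose $u\in A_i$ lies in fewer than $(1-\delta)\binom{r-1}{k-1}(n/r)^{k-1}$ blue $k$-cliques and red edges together. Delete $u$ and insert a new vertex $u'$ into $A_i$ joined by blue edges precisely to $\bigcup_{j\ne i}T_j$, giving $G''$. Then $G''$ is $F$-free: any copy of $F$ in $G''_{blue}$ must use $u'$ (else it lies in $G_{blue}-u$), say $u'=\phi(v_*)$; the set $\phi(N_F(v_*))$ consists of at most $|V(F)|$ typical vertices outside $A_i$, each blue-adjacent to all but $o(n)$ of $T_i$, so there is a typical $w\in A_i\setminus\phi(V(F))$ blue-adjacent to all of them, and $\phi$ with $v_*$ re-routed to $w$ is a copy of $F$ in $G_{blue}-u$, a contradiction. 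On the other hand $u'$ is blue-joined to $\bigcup_{j\ne i}T_j$, so it lies in $(1-o(1))\binom{r-1}{k-1}(n/r)^{k-1}$ blue $k$-cliques (almost every transversal $(k-1)$-tuple of typical vertices in $k-1$ of the $r-1$ parts $A_j$ yields one). Hence $g(G'')=g(G)-(\text{blue }k\text{-cliques and red edges at }u)+(\text{blue }k\text{-cliques at }u')>g(G)=\ex^{{\mathrm col}}(n,F)$, contradicting extremality; so no such $u$ exists, which is (i).

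For (ii): a clique-counting step shows that every vertex $v\in A_i$, writing $n_j(v)=|N_{blue}(v)\cap A_j|$, has $n_j(v)\ge(1-o(1))n/r$ for all but one index $j$ --- the number of blue $k$-cliques through $v$ is $\le e_{k-1}(n_1(v),\dots,n_r(v))+o(n^{k-1})$, since a non-transversal clique uses one of the $o(n^2)$ within-part blue edges, and this together with part (i), the monotonicity of the elementary symmetric polynomial $e_{k-1}$ and $n_j(v)\le(1+o(1))n/r$ forces all but one of the $n_j(v)$ to be nearly maximal. Re-assign each vertex to a part in which it has at most $\varepsilon n$ blue neighbours whenever possible; the vertices that stay ``stuck'' are $B$, and by the previous sentence each one is blue-adjacent to at least $\varepsilon n$ vertices of every part and to at least $cn$ in all but one, while each $v\in A_i\setminus B$ has at most $\varepsilon n$ blue neighbours in $A_i$ and all but at most $\varepsilon(2r^k+1)n$ in every other part; moreover a red edge with an end outside $B$ could be recoloured blue --- which preserves $F$-freeness --- gaining the number of blue $K_{k-2}$'s in the common blue neighbourhood of its ends minus one, a positive quantity because the two ends have nearly full blue degree into $\ge r-2\ge k-2$ common parts, so both ends of every red edge lie in $B$. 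Finally, if $|A_i\cap B|>K(\sigma(F)-1)$ for a large $K=K(F,\varepsilon)$, then a round-robin pigeonhole over the $r$ parts (``some vertex of $A_j$ is a common blue neighbour of a $\ge\varepsilon r$-fraction of the surviving $B$-vertices'') run for $|V(F)|$ rounds leaves $\ge\sigma(F)$ vertices of $A_i\cap B$ together with sets $W_j\subseteq A_j$, $|W_j|=|V(F)|$, of common blue neighbours; pruning the $W_j$ to typical vertices makes $\bigcup_jW_j$ span a complete blue $r$-partite graph, and embedding a smallest colour class of an $(r+1)$-colouring of $F$ into the chosen $B$-vertices (legitimate: that class is independent and its images are blue-adjacent to all of $\bigcup_jW_j$) with the remaining $r$ classes into the $W_j$ gives a blue copy of $F$, a contradiction. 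Thus $|A_i\cap B|\le K(\sigma(F)-1)$ for each $i$, so $|B|\le rK(\sigma(F)-1)$, and the stated local conditions are exactly those defining $A_i\setminus B$.

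For (iii): suppose $x$, in part $A_i$, lies in fewer than $(1-\delta)\binom{r-1}{k-1}(n/r)^{k-1}$ hyperedges. Delete $x$ and add a new vertex $x'$ together with all hyperedges $\{x'\}\cup S$, where $S$ ranges over transversal $(k-1)$-cliques on super-good vertices in $k-1$ of the parts $A_j$ ($j\ne i$) that extend to a hyperedge of $\cH$ together with all but $o(n)$ of the super-good vertices of $A_i$; $(1-o(1))\binom{r-1}{k-1}(n/r)^{k-1}$ such hyperedges remain, so the new hypergraph has more hyperedges than $\cH$. It is Berge-$F$-free: a Berge-$F$ in it uses some new hyperedge, and for a super-good $w\in A_i$ chosen to avoid the boundedly many vertices of the copy, to avoid the boundedly many coincidences with the copy's other hyperedges, and to satisfy $S_l\cup\{w\}\in\cH$ for every new hyperedge $\{x'\}\cup S_l$ used --- possible since each such $S_l$ extends to all but $o(n)$ super-good vertices of $A_i$ --- we may replace $x'$ by $w$ and each used new hyperedge $\{x'\}\cup S_l$ by the distinct old hyperedge $S_l\cup\{w\}$ of $\cH-x$, which contains the corresponding image edge after re-routing $x'$ to $w$, producing a Berge-$F$ in $\cH-x$, a contradiction. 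Hence no such $x$ exists, which is (iii). The main obstacle throughout is the structural groundwork rather than the insertions: upgrading the $\varepsilon n$-error of Theorem~\ref{maqi} to a genuine $o(n)$ ``typical'' partition, getting the constant in (ii) down to $rK(\sigma(F)-1)$ while pruning the $W_j$ to a truly complete $r$-partite configuration, and in (iii) arranging the added hyperedges so that enough distinct old hyperedges remain available for the re-routing.
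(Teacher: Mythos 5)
Your treatments of \textbf{(i)} and \textbf{(iii)} are essentially correct and are a mild variant of the paper's argument: instead of symmetrizing a deficient vertex onto the common neighbourhood of an explicit set $S$ of $|V(F)|$ low-defect vertices, you delete it and re-attach it to the typical vertices of the stability partition; the rerouting argument for $F$-freeness (resp.\ Berge-$F$-freeness, with the choice of $w$ avoiding the copy and the finitely many hyperedge coincidences) and the gain/loss count both go through. (Minor slip in (i): you must rule out copies of $F$ in all of $G''$, not only in $G''_{blue}$, but your rerouting handles that case verbatim since $u'$ has only the new blue edges.)

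The genuine gap is in \textbf{(ii)}. Your opening claim that every vertex $v\in A_i$ satisfies $n_j(v)\ge(1-o(1))n/r$ for all but one index $j$ does not follow from (i) together with the bound (number of blue $k$-cliques through $v$) $\le e_{k-1}(n_1(v),\dots,n_r(v))+o(n^{k-1})$: for instance with $k=3$, $r=10$ the profile $n_j(v)=0.9\,n/r$ for every $j$ gives $e_2\approx 0.81\binom{10}{2}(n/r)^2=36.45\,(n/r)^2>\binom{9}{2}(n/r)^2$, so the lower bound from (i) is met while no $n_j(v)$ is nearly maximal; what this counting really yields is only the weak conclusion that every vertex has at least $cn$ neighbours in all but one part for a small constant $c=c(F)$. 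Consequently the key property of (ii) --- that every vertex of $A_i\setminus B$ is adjacent to all but at most $\varepsilon(2r^k+1)n$ vertices of each $A_j$, $j\ne i$ --- is not established by your re-assignment step either (it is not ``by construction''); this is exactly the paper's claim that the sets $U_i$ are empty, and it requires a second extremality argument: for a deficient $v\in A_i\setminus B$ one deletes the at most $\varepsilon n$ edges of $v$ inside $A_i$ and joins $v$ in blue to the other parts (minus the exceptional vertices), checks $F$-freeness by a minimal-counterexample rerouting, and compares the loss of at most $\varepsilon n\cdot n^{k-2}$ blue $k$-cliques with a gain of at least $2r^k\varepsilon n\,(n/r-\varepsilon n)^{k-2}$ --- this bookkeeping is where the constant $2r^k+1$ comes from. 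Your proposal contains no substitute for this step, and the omission propagates: your recolouring argument for ``all red edges lie inside $B$'' invokes the nearly full blue degree of \emph{both} ends, which is unavailable for the end in $B$ (the paper uses only the $cn$-degrees of that end together with the near-full cross-degrees of the non-$B$ end, which in turn need $U_i=\emptyset$). The round-robin pigeonhole sketch for $|B|\le rK(\sigma(F)-1)$ is acceptable in spirit (the paper simply quotes Ma and Qiu \cite{mq}), but as it stands part (ii) is not proved.
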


We note that the analogous results for $\ex(n,K_k,F)$ can be found in \cite{mq}. Generalizations to some other graphs in place of $K_k$ can be found in \cite{gerb} for \textbf{(i)} and in \cite{ge2} for \textbf{(ii)}. Our proof follows the proofs in \cite{gerb} and \cite{ge2}.

\begin{proof} Observe that $G$ contains at least $\ex(n,K_k,F)-\ex(n,F)$ blue copies of $K_k$, thus $G_{blue}$ can be transformed to a complete $r$-partite graph by adding and removing $o(n^2)$ edges by Theorem \ref{maqi}. Note that there may be several different such complete $r$-partite graphs on the vertex set $V(G)$ that can be obtained this way, we pick one with the smallest number of edges inside the parts and denote it by $G'$. It is easy to see that each part has order $(1-o(1))n/r$, otherwise the number of blue cliques is at most $\binom{r}{k}\left(\frac{n}{r}\right)^k-\Theta(n^k)$. Let $A_1,\dots,A_r$ denote the parts and let $f(v)$ denote the number of red edges and blue $k$-cliques incident to $v$ that are removed this way. Then we have $\sum _{v\in V(G)}f(v)=o(n^k)$. Consider a set $S$ of $|V(F)|$ vertices in $A_1$ such that $\sum _{v\in S}f(v)$ is minimal. Then by averaging $\sum _{v\in S}f(v)\le \frac{|S|}{|V_1|}\sum _{v\in V_1}f(v)=o(n^{k-1})$.

Let us consider blue $k$-cliques and red edges that contain exactly one vertex $s$ of $S$, and the other vertices are in the common neighborhood of $S$ in $G$. Let
$d_G(k,S)$ denote the number of such blue $k$-cliques and red edges. Observe that each vertex of $S$ is in $\frac{d_G(k,S)}{|S|}$ such blue $k$-cliques and red edges. Clearly $\frac{d_{T(n,r)}(k,S)}{|S|}=(1+o(1))\binom{r-1}{k-1}(\frac{n}{r})^{k-1}$.

Let $x$ denote the number of blue $k$-cliques and red edges that contain $u$ and a vertex from $S$, then $x=O(n^{k-2})$. Now we apply a variant of Zykov's symmetrization \cite{zykov}. If $d_G(k,u)<\frac{d_G(k,S)}{|S|}-x$, then we remove the edges incident to $u$ from $G$. Then for every vertex $v$ that is connected to each vertex of $S$ with a blue edge, we connect $u$ to $v$ with a blue edge. For every vertex $w$ that is connected to each vertex of $S$ with a red edge, we connect $u$ to $w$ with a red edge. This way we do not create any copy of $F$, as the copy should contain $u$, but $u$ could be replaced by any vertex of $S$ that is not already in the copy, to create a copy of $F$ in $G$. We removed $d_G(k,u)$ blue $k$-cliqes and red edges, but added at least $\frac{d_G(H,S)}{|S|}-x$ blue $k$-cliqes and red edges, a contradiction. 

Therefore, we have that the blue $k$-cliques plus the red edges containing $u$ is at least \[\frac{d_G(k,S)}{|S|}-x\ge \frac{d_{T(n,r)}(k,S)}{|S|}-\sum_{v\in S}f(v)-x=\frac{d_{T(n,r)}(k,S)}{|S|}-o(n^{k-1}).\]

This completes the proof of \textbf{(i)}. 

The proof of \textbf{(iii)} is similar. We pick $S$ the same way, but instead of blue $k$-cliques and red edges, we count the hyperedges containing $u$, let $d_{\cH}(k,u)$ denote their number. Let $y$ denote the number of hyperedges that contain $u$ and a vertex from $S$. If $d_\cH(k,u)<\frac{d_\cH(k,S)}{|S|}-x$, then we remove the hyperedges containing $u$ and for every hyperedge $H$ that contains exactly one vertex $v\in S$, we add $(H\setminus \{v\})\cup \{u\}$ as a hyperedge. Then the same reasoning as above completes the proof of \textbf{(iii)}.

%The proof of \textbf{(ii)} is an extension of a result in \cite{mq} and the proof also easily extends to our case. 
Let $B$ denote the set of vertices that are adjacent to at least $\varepsilon n$ vertices in their part $A_i$. Note that by the choice of $G'$, vertices of $B$ are incident to at least $\varepsilon n$ vertices in each other part. Let $B_i=B\cap A_i$.

\begin{clm} There is a $K$ depending on $\varepsilon$ and $F$ such that
$|B|\le K(\sigma(F)-1)$.
\end{clm}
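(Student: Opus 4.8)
The plan is to argue by contradiction: I will show that if $|B|$ exceeds a certain constant depending only on $F$ and $\varepsilon$, then $G$ contains a copy of $F$, which is impossible since $G$ is $F$-free. Fix a proper $(r+1)$-colouring of $F$ with colour classes $W_1,\dots,W_{r+1}$, chosen so that $|W_{r+1}|=\sigma(F)=:s$, and write $W_{r+1}=\{u_1,\dots,u_s\}$. The embedding of $F$ I aim for sends $W_{r+1}$ to $s$ carefully chosen vertices of $B$ and each $W_p$ with $1\le p\le r$ into the part $A_p$. Since $W_1,\dots,W_{r+1}$ are independent sets in $F$, the only edges of $F$ that must be realised are between two distinct parts $A_p,A_q$ (for edges inside $W_1\cup\dots\cup W_r$) or between a chosen vertex of $B$ and some $A_p$ (for edges meeting $W_{r+1}$). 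The between-part edges cause no difficulty: by Theorem~\ref{maqi}, $G_{blue}$ differs from the complete $r$-partite graph $G'$ on the parts $A_1,\dots,A_r$ by $o(n^2)$ edges, so after discarding the $o(n)$ vertices incident to at least $\varepsilon' n$ between-part non-edges of $G_{blue}$ (here $\varepsilon'$ is a small positive constant to be fixed), every surviving vertex of $A_p$ is adjacent in $G$ to all but at most $\varepsilon' n$ vertices of each $A_q$ with $q\ne p$. The heart of the matter is thus to choose the $s$ vertices of $B$ playing the role of $W_{r+1}$ so robustly that each of them excludes only an $\Omega(n)$-sized set of images for any single neighbour in every part.

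The main obstacle, and the place where the constant is born, is the following extraction statement: there are $M_0=M_0(\varepsilon,r,s)$ and $\delta=\delta(\varepsilon,r,s)>0$ such that from any $M_0$ vertices, each adjacent to at least $\varepsilon n$ vertices in every $A_j$, one can select $s$ of them, say $b_{i_1},\dots,b_{i_s}$, with $\bigl|N(b_{i_1})\cap\dots\cap N(b_{i_s})\cap A_j\bigr|\ge\delta n$ for \emph{every} $j$. A greedy part-by-part extraction (fix one vertex, keep only those with many neighbours inside its neighbourhood trace on each part, recurse) breaks down because that trace may be as small as $\varepsilon n$ while a given candidate meets it in almost no points; one really must handle all $r$ parts simultaneously. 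I would do this probabilistically. For a uniformly random $s$-subset $S$ of the $M_0$ given vertices put $\Phi(S)=\prod_{j=1}^{r}|A_j|^{-1}\bigl|\bigcap_{i\in S}N(b_i)\cap A_j\bigr|\in[0,1]$. Reading each factor as the probability that an independent uniformly random $w_j\in A_j$ is adjacent to all $b_i$ with $i\in S$ and exchanging expectations gives $\mathbb{E}_S\,\Phi(S)=\binom{M_0}{s}^{-1}\,\mathbb{E}_{w_1,\dots,w_r}\binom{D}{s}$, where $D=D(w_1,\dots,w_r)$ is the number of the given vertices adjacent to all of $w_1,\dots,w_r$. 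Since each given vertex is adjacent to all of $w_1,\dots,w_r$ with probability at least $\varepsilon^{r}$ we get $\mathbb{E}\,D\ge M_0\varepsilon^{r}$, and, as $D\le M_0$, a reverse Markov inequality shows $D\ge M_0\varepsilon^{r}/2$ with probability at least $\varepsilon^{r}/2$; once $M_0\ge 4s/\varepsilon^{r}$ this forces $\mathbb{E}_S\,\Phi(S)$ above a positive constant $\gamma=\gamma(\varepsilon,r,s)$. Hence some $S$ has $\Phi(S)\ge\gamma$; as every factor of $\Phi(S)$ is at most $1$, every factor is at least $\gamma$, i.e. $\bigl|\bigcap_{i\in S}N(b_i)\cap A_j\bigr|\ge\gamma|A_j|\ge\delta n$ for all $j$ (using $|A_j|=(1-o(1))n/r$). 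When $s=1$ the statement is trivial with $M_0=1$, $\delta=\varepsilon$, consistent with what follows.

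Granting this, suppose $|B|\ge M_0$. Every vertex of $B$ is adjacent to at least $\varepsilon n$ vertices in each part $A_j$, so the extraction statement yields $b_{i_1},\dots,b_{i_s}\in B$ as above; set $\phi(u_\ell)=b_{i_\ell}$. Now embed $W_1,\dots,W_r$ greedily in an arbitrary vertex order: to place $w\in W_p$, note that its already-placed $F$-neighbours lie in parts other than $A_p$ and, being non-exceptional, forbid at most $\varepsilon'|V(F)|\,n$ images in $A_p$; its $F$-neighbours among $u_1,\dots,u_s$ force $\phi(w)$ into an intersection of sets $N(b_{i_\ell})\cap A_p$ of size at least $\delta n$; and at most $s+|V(F)|$ further vertices are excluded as already used or reserved. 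Choosing $\varepsilon'$ to be a small enough constant multiple of $\delta/|V(F)|$ leaves $\Omega(n)$ admissible images, so for $n$ large the greedy embedding succeeds and produces a copy of $F$ in $G$ — a contradiction. Therefore $|B|<M_0$, a constant depending only on $F$ and $\varepsilon$. When $\sigma(F)=1$ this already gives $|B|=0$; when $\sigma(F)\ge 2$ it gives $|B|<M_0\le M_0\bigl(\sigma(F)-1\bigr)$. In either case the claim holds with $K=M_0$. The fact that many more than $s$ candidate vertices are needed to realise a colour class of size $s=\sigma(F)$ robustly is precisely why the bound takes the form $K\bigl(\sigma(F)-1\bigr)$ rather than $\sigma(F)-1$.
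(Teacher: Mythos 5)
Your argument is correct, but it is a genuinely different route from the paper's. The paper disposes of this claim in two sentences: it observes that the analogous claim for (uncolored) graphs is proved by Ma and Qiu in \cite{mq}, and that their proof never uses extremality of the host graph, only that it contains $\ex(n,K_k,F)-o(n^k)$ copies of $K_k$, which the present $G$ does; so the claim is imported wholesale. You instead prove it from scratch: assuming $|B|\ge M_0$, you extract $\sigma(F)$ vertices of $B$ whose common neighbourhood has size $\Omega(n)$ in \emph{every} part simultaneously (via the nice averaging identity $\mathbb{E}_S\Phi(S)=\binom{M_0}{s}^{-1}\mathbb{E}_{w_1,\dots,w_r}\binom{D}{s}$, a reverse Markov bound, and the observation that a product of $[0,1]$-factors bounded below forces every factor to be bounded below), and then greedily embed $F$ with the smallest colour class on those vertices and the remaining classes in the parts, contradicting $F$-freeness; I checked the probabilistic extraction and the greedy step and both are sound, and your conclusion ($|B|$ bounded by a constant, and $|B|=0$ when $\sigma(F)=1$) does imply the claim as stated, indeed in exactly the form needed later for Theorem \ref{cc}. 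What each approach buys: the paper's is a one-line reduction to existing work and automatically inherits the precise $K(\sigma(F)-1)$ shape of Ma--Qiu's bound; yours is self-contained, uses only the facts already established before the claim (part sizes $(1-o(1))n/r$, $o(n^2)$ edits to a complete $r$-partite graph, every vertex of $B$ having at least $\varepsilon n$ neighbours in each part, and $F$-freeness), and makes the mechanism and constants explicit. Your closing heuristic about why the bound takes the form $K(\sigma(F)-1)$ rather than $\sigma(F)-1$ is not really an explanation of that form (in your proof the bound is simply a constant, which happens to be writable as $K(\sigma(F)-1)$ when $\sigma(F)\ge 2$ and is $0$ when $\sigma(F)=1$), but this is harmless commentary and does not affect correctness.
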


The analogous claim for uncolored graphs $G_0$ with $\ex(n,K_k,F)$ copies of $K_k$ is in \cite{mq}. However, the proof of that claim does not use that $G_0$ is extremal, only that $G_0$ contains $\ex(n,K_k,F)-o(n^k)$ copies of $K_k$. As this holds for $G$ as well, the claim follows.

%It is left to show that every vertex $v\in A_i\setminus B$ is adjacent to all but $\varepsilon n$ vertices in $A_j$ of $j\neq i$ and that every red edge is between two vertices of $B$. 

Consider now the set $U_i$ of vertices $v$ such that $v\in A_i\setminus B$ is adjacent to less than $|A_j|-\varepsilon(2r^k+1) n$ vertices of some $A_j$. As there are $o(n^2)$ edges missing between parts, we have that $|U_i|=o(n)$. 

\begin{clm}
    For each $i$ we have that $U_i=\emptyset$.
\end{clm}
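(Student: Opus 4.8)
The plan is to show that no vertex $v \in A_i \setminus B$ can be adjacent to fewer than $|A_j| - \varepsilon(2r^k+1)n$ vertices of some part $A_j$, by applying a symmetrization argument analogous to the one used in part \textbf{(i)}. The rough idea is that such a deficient vertex $v$ has "too few" blue $k$-cliques and red edges through it, so we could delete its edges and reconnect it to mimic a well-behaved vertex, increasing $g(G)$ — contradiction with extremality.

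First I would make the deficiency quantitative. Since $v \in A_i \setminus B$, it is adjacent to at most $\varepsilon n$ vertices of $A_i$, and by assumption it misses at least $\varepsilon(2r^k+1)n$ vertices of some $A_j$. I would estimate the number $d_G(k,v)$ of blue $k$-cliques plus red edges containing $v$: a $k$-clique through $v$ can use at most one vertex from $A_i$ (essentially, since $v$ has few blue neighbors in $A_i$ and blue cliques are $K_k$'s in the near-$r$-partite $G_{blue}$), and the blue cliques through $v$ avoiding $A_i$ entirely lie among the blue neighbors of $v$ in $\bigcup_{\ell \neq i} A_\ell$; losing $\varepsilon(2r^k+1)n$ neighbors in $A_j$ forces a loss of order $\varepsilon n^{k-1}$ blue $k$-cliques compared with the maximum $(1+o(1))\binom{r-1}{k-1}(n/r)^{k-1}$. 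The red edges through $v$ number $O(n)$ (in fact, once the claim on $B$ is established, only $O(1)$ red edges touch $A_i \setminus B$) and are negligible at order $n^{k-1}$. So $d_G(k,v) \le (1 - \delta)\binom{r-1}{k-1}(n/r)^{k-1}$ for some $\delta = \delta(\varepsilon, F) > 0$, provided $\varepsilon$ is chosen small relative to the constant hidden in the clique count.

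Next I would run the symmetrization. Pick a set $S \subseteq A_i$ of $|V(F)|$ vertices minimizing $\sum_{s \in S} f(s)$ exactly as in part \textbf{(i)} — and, to be safe, we may take $S$ disjoint from $B$ and from $v$, which is possible since $|B|$ is bounded and $|A_i| = (1-o(1))n/r$. As before, each vertex of $S$ lies in $\frac{d_G(k,S)}{|S|} \ge \frac{d_{T(n,r)}(k,S)}{|S|} - o(n^{k-1}) = (1-o(1))\binom{r-1}{k-1}(n/r)^{k-1}$ blue $k$-cliques and red edges whose other vertices lie in the common neighborhood of $S$, and the number $x$ of blue $k$-cliques and red edges through $v$ and some vertex of $S$ is $O(n^{k-2})$. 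Since $d_G(k,v) \le (1-\delta)\binom{r-1}{k-1}(n/r)^{k-1} < \frac{d_G(k,S)}{|S|} - x$ for large $n$, we delete all edges at $v$, connect $v$ by a blue edge to every vertex joined to all of $S$ by blue edges, and by a red edge to every vertex joined to all of $S$ by red edges. This creates no copy of $F$ (a copy through $v$ would yield one through a spare vertex of $S$ in the original $G$, as $|S| = |V(F)|$), yet strictly increases $g(G)$ — contradicting $g(G) = \ex^{\mathrm{col}}(n,F)$. Hence $U_i = \emptyset$.

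The main obstacle I anticipate is the careful bookkeeping in the clique-count estimate of the previous paragraph: one must argue cleanly that a vertex missing $\Theta(\varepsilon n)$ neighbors in one part genuinely loses $\Theta(\varepsilon n^{k-1})$ blue $k$-cliques, which requires controlling how blue $k$-cliques through $v$ distribute across the parts $A_\ell$ — using that $G_{blue}$ is within $o(n^2)$ edges of the complete $r$-partite graph $G'$ and that $v \notin B$ has at most $\varepsilon n$ blue neighbors inside its own part. A subtle point is the order of quantifiers: $\varepsilon$ must be fixed small enough (relative to the universal constant $\binom{r-1}{k-1}/r^{k-1}$ and the $2r^k+1$ factor) that the deficiency $\delta$ is a genuine positive constant, and only then is $n$ taken large; this matches the phrasing "$\varepsilon > 0$ sufficiently small" in the statement. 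Everything else is a direct transcription of the symmetrization in part \textbf{(i)}.
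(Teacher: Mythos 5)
Your proof is correct, but it takes a genuinely different route from the paper's. The paper treats all of $U_i$ at once: it deletes the edges from each $v\in U_i$ into $A_i$, joins $v$ in blue to every vertex of $V_j$ for all $j\neq i$, proves the modified graph is still $F$-free by taking a copy of $F$ with the fewest vertices in $U_i$ and replacing such a vertex by one of the $\frac{n}{r}-o(n)$ common neighbours (in $V_i$) of its neighbours in the copy, and then compares the loss (at most $\varepsilon n^{k-1}$ blue $k$-cliques and red edges per vertex, since each uses one of the fewer than $\varepsilon n$ edges from $v$ into $A_i$) with the gain (at least $2r^k\varepsilon n\,(n/r-\varepsilon n)^{k-2}$ new blue $k$-cliques per vertex), so $g$ strictly increases unless $U_i=\emptyset$. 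You instead argue vertex by vertex: a vertex $v\in U_i$ has a deficient count of blue $k$-cliques plus red edges, which already contradicts part \textbf{(i)}; your re-run of the Zykov symmetrization with the set $S$ is a harmless but redundant re-derivation of \textbf{(i)} for the single vertex $v$, at the same level of rigour as the paper's own proof of \textbf{(i)}. What your route buys is a shorter argument with no need for the minimal-copy replacement trick or a global rewiring; what it costs is exactly the bookkeeping you flag: the deficit $\binom{r-2}{k-2}\varepsilon(2r^k+1)n\left(\frac{n}{r}\right)^{k-2}$ caused by the missing neighbours in $A_j$ must dominate both the at most $\varepsilon n\cdot n^{k-2}$ cliques through $v$ and a neighbour in $A_i$ and the $o(n^{k-1})$ cliques using a blue edge inside some part $A_\ell$, $\ell\neq i$, or a red edge. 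It does, because $(2r^k+1)/r^{k-2}\ge 2r^2$ comfortably exceeds the constant $1/(k-2)!$ from the $A_i$-side for every fixed $\varepsilon$ --- this is precisely the role of the factor $2r^k+1$ in the definition of $U_i$ --- so $d_G(k,v)\le\binom{r-1}{k-1}(n/r)^{k-1}-\Omega_{\varepsilon}(n^{k-1})$, contradicting \textbf{(i)} for large $n$. Your handling of the quantifiers ($\varepsilon$ fixed first, then $n$ large) is also correct, so the proposal stands as a valid alternative proof.
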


\begin{proof}[Proof of Claim]
Let $V_i=A_i\setminus (B_i\cap U_i)$, then we have that $|V_i|\ge |A_i|-\varepsilon n$.

Let us delete the edges from each $v\in U_i$ to $A_i$ and connect $v$ to each vertex of each $V_j$, $j\neq i$ with a blue edge. We claim that the resulting graph $G''$ is $F$-free. Indeed, consider a copy $F_0$ of $F$ with the smallest number of vertices in $U_i$. Clearly $F_0$ contains a vertex $v\in U_i$, as all the new edges are incident to such a vertex.  Let $Q$ be the set of vertices in $F_0$ that are adjacent to $v$ in $G'$. They are each from $\cup_{j\neq i}V_j$. Their common neighborhood in $V_i$ is of order $\frac{n}{r}-o(n)$.
Therefore, at least one of the common neighbors is not in $F_0$, thus we can replace $v$ with that vertex to obtain another copy of $F$ with less vertices from $\cup_{i=1}^r U_i$, a contradiction.

We deleted at most $\varepsilon n^{k-1}$ blue $k$-cliques and red edges for each vertex $v\in U_i$, since they each contain one of the less than $\epsilon n$ edges incident to $v$ inside $U_i$. We claim that we added more than $\varepsilon n^{k-1}$ blue $k$-cliques. We consider only those blue $k$-cliques that contain $v$, a new neighbor of $v$ in $V_j$ with $j\neq i$, and $k-2$ other vertices from other sets $V_\ell$. We have at least $2r^k\varepsilon n$ choices for the neighbor and at least $n/r-\varepsilon n$ choices for the other vertices. If $\varepsilon$ is sufficiently small, then indeed, we obtain more than $\varepsilon n$ new blue $k$-cliques, thus $g(G'')>g(G')$, a contradiction unless $U_i$ is empty.
\end{proof}

Now we show that there is a constant $c=c(F)$ such that each vertex is adjacent to at least $cn$ vertices in all but one parts. Assume that $v$ is adjacent to less than $cn$ vertices in $A_1$ and in $A_2$. Then the number of blue cliques containing $v$ is at most $\binom{r-2}{k-1}(\frac{n}{r})^{k-1}+\binom{r-2}{k-2}(\frac{n}{r})^{k-2}cn+\binom{r-2}{k-3}(\frac{n}{r})^{k-3}cn^2$, contradiction to \textbf{(i)} if $c$ is small enough.

It is left to show that each red edge is between vertices in $B$. Assume that $u\not\in B$ and $uv$ is a red edge. Let us change its color to blue. We will find more than one new blue $k$-clique greedily. We can assume without loss of generality that $u\in A_1$ and $v$ is in either $A_1$ or in $A_2$. Let us observe that $u$ and $v$ have at least $cn-\varepsilon(2r^k+2) n$ common neighbors in $G_{blue}$ inside $V_3$, we pick one of them. These three vertices have at least $cn-2\varepsilon(2r^k+2) n$ common neighbors in $G_{blue}$ inside $V_4$, we pick one of them, and so one. We can pick $k$ vertices if $cn-(k-2)\varepsilon(2r^k+2) n>0$, which holds if $\varepsilon$ is small enough. Clearly we can pick more than one blue $k$-clique this way, completing the proof of \textbf{(ii)}.
%hmm, hát itt gond van... több nemszomszédja van $u$-nak mint szomszédja $v$-nek a harmadik osztályokban... az igaz hogy egyik vége $B$-ben, az ad $+O(n)$-et... szóval, szimmnél vesztünk: $\varepsilon n$-szer legfeljebb $K_{k-2}$-k a szomszédságában. Ha adnánk $\varepsilon n$ új élt $V_i$-be, akkor nyernénk: közel ennyit, mert $V_i$-beli szomszédságok szinte teljesek. De ez persze nem elég nekünk. Talán uez másik $\varepsilon$-nal is? ami miatt kapunk egy nagyobb $B'$-t, és minden azon kívüli csak $\varepsilon n$-et kerül el. Akkor a $B'$-beliekből persze csak $\varepsilon' n$ megy a többi osztályba. Vagy: megmutatni hogy $B$-beliekből nem $\varepsilon n$ hanem $\Omega(n)$ megy minden más osztályba, (i) miatt!!!
\end{proof}

Theorem \ref{konst} is easily implied by \textbf{(ii)} of Theorem \ref{strucc}, since in an $F$-free $n$-vertex blue-red graph, the number of blue $k$-cliques is at most $\ex(n,K_k,F)$, while the number of red edges inside $B$ is $O(1)$. Theorem \ref{cc} is also implied by  \textbf{(ii)} of Theorem \ref{strucc}, since a color-critical vertex means that $\sigma(F)=1$, thus $|B|=0$, hence there are no red edges.

Let us continue with the proof of Theorem \ref{kisk}. Recall that it states that if $\chi(F)>k$ and $k\le 4$ or if $\chi(F)$ is sufficiently large, then Conjecture \ref{sejt} holds.

\begin{proof}[Proof of Theorem \ref{kisk}] Let $\chi(F)=r+1$.
We will use Lemma \ref{celeb2}. Let $G$ be a blue-red $F$-free graph with $g(G)=\ex^{{\mathrm col}}(n,F)$. Assume that there is a red edge $uv$ in $G$ and apply now \textbf{(ii)} of Theorem \ref{strucc}. We obtain a partition of $V(G)$ to $A_1,\dots, A_r$ with $|A_i|=(1+o(1))n/r$ such that there are $o(n)$ edges inside parts, and there is a set $B$ of vertices with $|B|=o(n)$ such that
each vertex outside $B$ is adjacent to all but $o(n)$ vertices in each other part.

Assume that $u$ and $v$ have $\Omega(n)$ common neighbors in at least $k-2$ of the sets $A_1,\dots,A_r$, say $A_1,\dots,A_{k-2}$. Then at least $\Omega(n)$ of those vertices are not in $B$, we will use only those vertices. We pick a common neighbor in $A_1\setminus B$, then it has $\Omega(n)$ common neighbor with $u$ and $v$ in $A_2$. Therefore, we can pick a common neighbor in $A_2\setminus B$, and so on. The resulting cliques do not contain any vertex of $B$, thus by turning $uv$ blue, we obtain multiple blue $k$-cliques, thus $g(G)$ increases, a contradiction.

We obtained that $u$ and $v$ have $\Omega(n)$ common neighbors in at most $k-3$ of the sets $A_i$, say $A_1,\dots, A_{k-3}$. In the remaining $r-k+3$ sets $A_i$, they have $o(n)$ common neighbors, thus at least one of them, say $u$ has at most $(1+o(1))(r-k+3)n/2r$ neighbors in $A_{k-2},\dots, A_r$. Consider now the number of blue $k$-cliques containing $u$. There are $o(n^{k-1})$ blue $k$-cliques that contain $u$ and an edge inside an $A_i$ that is not incident to $u$. Therefore, we can focus on those blue $k$-cliques that contain $u$, and the other $k-1$ vertices are from different parts. 

Let $K$ be such a blue $k$-clique and assume that $K$ contains $i$ vertices from $A_1,\dots,A_{k-3}$. There are at most $(1+o(1))\binom{k-3}{i}\left(\frac{n}{r}\right)^{i}$ ways to pick such an $i$-set. For the remaining $k-1-i$ vertices of $K$, we have to pick one neighbor of $u$ from $k-1-i$ of the remaining $r-k+3$ sets, and in total $u$ has $(1+o(1))(r-k+3)n/2r$ neighbors in those sets. Then the number of $(k-1-i)$-cliques is at most $\ex((1+o(1))(r-k+3)n/2r,K_{k-1-i},K_{r-k+4})$. A theorem of Zykov \cite{zykov} states that $\ex(n,K_s,K_t)=\cN(K_s,T(n,t-1))=(1+o(1))\binom{t-1}{s}\left(\frac{n}{t-1}\right)^{s}$, thus 
%largest number of $(k-1-i)$-cliques can be found if $u$ has roughly the same number of neighbors in each of the sets $A_{k-2},\dots, A_r$, hence 
there are at most $(1+o(1))\binom{r-k+3}{k-i-1}\left(\frac{n}{2r}\right)^{k-1-i}$ ways to pick the $(k-1-i)$-clique.

We apply \textbf{(i)} of Theorem \ref{strucc}, thus we know that each vertex $v$ is in at least $(1+o(1))\binom{r-1}{k-1}(\frac{n}{r})^{k-1}$ blue $k$-cliques. Therefore, 
\[(1+o(1))\sum_{i=0}^{k-3}\binom{k-3}{i}\left(\frac{n}{r}\right)^{i}\binom{r-k+3}{k-i-1}\left(\frac{n}{2r}\right)^{k-1-i}\ge (1+o(1))\binom{r-1}{k-1}\left(\frac{n}{r}\right)^{k-1}.\]

This holds only if

\begin{equation}\label{equ}\sum_{i=0}^{k-3}\binom{k-3}{i}\binom{r-k+3}{k-i-1}\left(\frac{1}{2}\right)^{k-1-i}\ge \binom{r-1}{k-1}.\end{equation}

If $k=3$, then $i=0$ and $\binom{r}{k-1}/4\ge \binom{r-1}{k-1}$, a contradiction.

If $k=4$, then (\ref{equ}) gives $\binom{r-1}{3}\left(\frac{1}{2}\right)^{3}+\binom{r-1}{2}\left(\frac{1}{2}\right)^{2}\ge \binom{r-1}{3}$. If $r\ge 6$, then $\binom{r-1}{2}\le \binom{r-1}{3}$, thus $\binom{r-1}{3}\left(\frac{1}{2}\right)^{3}+\binom{r-1}{2}\left(\frac{1}{2}\right)^{2}\le \binom{r-1}{3}\left(\frac{1}{8}+\frac{1}{4}\right)<\binom{r-1}{3}$, a contradiction. If $r=5$ or $r=4$, then one can easily obtain a contradiction as well. This completes the proof of \textbf{(i)}.

%If $k=5$, then (\ref{equ}) gives $\binom{r-2}{4}\left(\frac{1}{2}\right)^{4}+2\binom{r-2}{3}\left(\frac{1}{2}\right)^{3}+\binom{r-2}{2}\left(\frac{1}{2}\right)^{2}\ge \binom{r-1}{4}$. ja az már 

There are several other pairs $(k,r)$ when we could obtain a contradiction a similar way. However, if $k=r$, the left hand side has a term $\binom{k-3}{k-4}/8$. If $k\ge 11$, then this term alone is larger than the right hand side, thus we do not have a contradiction in general. In fact, one can easily see that for $k=r=5$ we do not obtain any contradiction. On the other hand, if $k$ is fixed and $r$ grows, there is only one term on the left hand side of (\ref{equ}) of order $r^{k-1}$, and it is $r^{k-1}/2^{k-1}(k-1)!$. Since the leading term on the right hand side is $r^{k-1}/(k-1)!$, we obtain a contradiction for $r$ large enough, proving \textbf{(ii)}.
\end{proof}

Let us continue with the proof of Theorem \ref{blab} that we restate here for convenience.

\begin{thm*}
    \textbf{(i)}
Let us assume that $F$ consists of $s$ components with chromatic number $r+1$, each with a color-critical edge, and any number of components with chromatic number at most $r$. Then $\ex_k(n,\textup{ Berge-}F)=\cN(K_k,K_{s-1}+T(n-s+1,r))$.

    \textbf{(ii)} $\ex_k(n,\textup{ Berge-}B_{r+1,1})=\cN(K_k,T^+(n,r))$.
\end{thm*}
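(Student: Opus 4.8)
The plan splits according to the two parts.

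\textbf{(ii)} The graph $B_{r+1,1}$ is connected, has chromatic number $r+1>k$, and its cut vertex is color-critical, since deleting it leaves two disjoint copies of $K_r$, which is $r$-colorable. Hence Theorem \ref{cc} applies and gives $\ex_k(n,\textup{Berge-}B_{r+1,1})=\ex(n,K_k,B_{r+1,1})$ for $n$ large, and combining this with the result of Gerbner and Patk\'os \cite{gerpat} that $\ex(n,K_k,B_{r+1,1})=\cN(K_k,T^+(n,r))$ finishes the proof. For \textbf{(i)} the lower bound is immediate: $K_{s-1}+T(n-s+1,r)$ is $F$-free, because each of the $s$ pairwise disjoint components of $F$ of chromatic number $r+1$ would have to meet the set of $s-1$ added vertices (the Turán graph being $r$-colorable), which is impossible; hence $\cN(K_k,K_{s-1}+T(n-s+1,r))\le\ex(n,K_k,F)\le\ex_k(n,\textup{Berge-}F)$ by the inequality of Gerbner and Palmer. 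Since $\ex(n,K_k,F)=\cN(K_k,K_{s-1}+T(n-s+1,r))$ by \cite{ge2}, it remains to prove $\ex_k(n,\textup{Berge-}F)\le\ex(n,K_k,F)$, and by Lemma \ref{celeb2} it suffices to show $\ex^{{\mathrm col}}(n,F)\le\ex(n,K_k,F)$.

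So let $G$ be a blue-red $F$-free graph on $n$ vertices with $g(G)=\ex^{{\mathrm col}}(n,F)$; here $\chi(F)=r+1>k$ as in Conjecture \ref{sejt}. First I would record that $\sigma(F)=s$: any proper $(r+1)$-coloring of $F$ uses all $r+1$ colors on each high-chromatic component, so every color class has size at least $s$, while recoloring one endpoint of a color-critical edge of each high-chromatic component $C_i$ with a fresh color (in an $r$-coloring of $C_i$ minus that edge) gives an $(r+1)$-coloring with a color class of size exactly $s$. Applying Theorem \ref{strucc}(ii) with $\varepsilon$ small, we obtain parts $A_1,\dots,A_r$ and a set $B$ with $|B|=O(1)$ carrying all red edges, in which each vertex of $A_i\setminus B$ misses only $O(\varepsilon n)$ vertices of each other part; in particular $|E(G_{red})|=O(1)$. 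If $s=1$ then $B=\emptyset$, so $G=G_{blue}$ is $F$-free and $g(G)=\cN(K_k,G_{blue})\le\ex(n,K_k,F)$; thus assume $s\ge2$. Since $G_{blue}$ is $F$-free with $\cN(K_k,G_{blue})=g(G)-|E(G_{red})|\ge\ex(n,K_k,F)-O(1)$, Theorem \ref{maqi} shows $G_{blue}$ can be turned into $T(n,r)$, hence into $K_{s-1}+T(n-s+1,r)$ (which differs from $T(n,r)$ in only $O(n)$ edges), by changing $o(n^2)$ edges; combined with Theorem \ref{strucc}(ii) this forces every vertex of $B$ to be blue-adjacent to all but $o(n)$ other vertices.

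The main step, and the point I expect to be the real obstacle, is to upgrade this to: every $u\in B$ is blue-adjacent to \emph{all} other vertices. Suppose $u$ misses $z$ in $G_{blue}$. If $uz$ were red then $u,z\in B$ are both nearly universal, so recoloring $uz$ blue would produce $\gg 1$ new blue $k$-cliques (the common blue neighbourhood of $u$ and $z$ contains $(1-o(1))n/r$ vertices in at least $k-1$ parts, hence many blue $(k-2)$-cliques) and raise $g$, a contradiction; so $uz\notin E(G)$. Adding $uz$ as a blue edge likewise creates $\gg 1$ new blue $k$-cliques, so $g(G+uz)>g(G)$ unless $G+uz$ contains a copy of $F$, necessarily through $uz$. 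Let $C$ be the component of $F$ carrying the corresponding edge $xy$, with $u$ in the role of $x$. If the images of $N_F(x)$ span at most $r-1$ parts (in particular if $\chi(C)\le r$), then $u$ can be replaced by a suitable vertex of $G$ in an omitted part, giving a copy of $F$ in $G$ — a contradiction. Otherwise one uses the color-critical edge of $C$ (so here $\chi(C)=r+1$): as $C$ minus that edge is $r$-colorable, a copy of $F$ in $G+uz$ that uses $uz$ but does not essentially need the apex vertex $u$ can be re-embedded with $C$ placed disjointly inside the Turán part, again contradicting $F$-freeness of $G$; making this replacement precise is exactly where the color-critical-edge hypothesis is used, and it follows the symmetrization scheme of \cite{ge2}. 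Once every $u\in B$ is blue-adjacent to everything, any two vertices of $B$ are joined by a blue edge, so $G$ has no red edge at all; hence $G=G_{blue}$ is $F$-free and $g(G)=\cN(K_k,G_{blue})\le\ex(n,K_k,F)=\cN(K_k,K_{s-1}+T(n-s+1,r))$, which completes the proof.
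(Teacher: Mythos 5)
Your part \textbf{(ii)} and the framing of \textbf{(i)} (the lower bound, $\sigma(F)=s$, the reduction via Lemma \ref{celeb2} to showing $\ex^{{\mathrm col}}(n,F)\le\ex(n,K_k,F)$, and the $s=1$ case) are fine and agree with the paper. But the heart of the upper bound in \textbf{(i)} --- eliminating the red edges, which all live inside $B$ --- is exactly the step you leave open, and your proposed route to it has genuine gaps. First, the intermediate claim that ``every vertex of $B$ is blue-adjacent to all but $o(n)$ other vertices'' does not follow from what you cite: Theorem \ref{maqi} only gives a global bound of $o(n^2)$ edge edits (a single vertex of $B$ missing $\Omega(n)$ edges costs only $O(n)$ edits), and Theorem \ref{strucc}(ii) gives only the lower bounds $\varepsilon n$ per part and $cn$ in all but one part; neither rules out a $B$-vertex missing a constant fraction of some part, and Theorem \ref{strucc}(i) does not help either, since an ordinary Tur\'an vertex already attains the clique count while missing an entire part. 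Because of this, the easy recoloring argument fails for a red edge $uv$ with both ends in $B$: the two blue neighbourhoods, each only of size $\ge cn$ per part, need not intersect, which is precisely why Theorem \ref{strucc}(ii) could only exclude red edges with an endpoint outside $B$, and why the general conjecture is open.

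Second, your main step (``every $u\in B$ is blue-adjacent to \emph{all} other vertices'') is only sketched, and the sketch is flawed: when adding $uz$ creates a copy of $F$ through an $(r+1)$-chromatic component $C$, you propose to ``re-embed $C$ disjointly inside the Tur\'an part,'' which is impossible since that structure is essentially $r$-partite and $\chi(C)=r+1$; any honest re-embedding must use edges inside parts or vertices of $B$, and controlling those is the actual content of the proof. Note also that your claimed statement can only be true after one knows $|B|\le s-1$ (with $s$ blue-universal vertices plus the near-Tur\'an remainder one could embed all $s$ high-chromatic components and hence $F$), and you never bound $|B|$ beyond $O(1)$. The paper instead argues structurally: using the $s$ color-critical edges it shows there cannot be $s$ independent edges inside parts each with an endpoint outside $B$, deduces that the inside-part edges off $B$ form at most $s-1-|B|$ stars plus $O(1)$ edges, shows $|B|<s-1$ contradicts extremality (one could add a universal vertex), and concludes $|B|=s-1$ with no inside-part edges off $B$, i.e. $G\subseteq K_{s-1}+T(n-s+1,r)$, from which the red edges are killed by a $\Theta(n^{k-2})$-versus-$1$ count. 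Your proposal is missing an argument of this kind at exactly the decisive point.
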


The corresponding generalized Tur\'an results are proved in \cite{ge2} and we will extend the proofs from there. We omit some details. We remark that in \cite{ge2}, the proof of the statement $\ex(n,K_k,F)=\cN(K_k,K_{s-1}+T(n-s+1,r))$ shows a bit more: if an $n$-vertex $F$-free graph $G$ is not a subgraph of $K_{s-1}+T(n-s+1,r)$, then it contains $\cN(K_k,K_{s-1}+T(n-s+1,r))-\Omega(n^{k-1})$ copies of $K_k$. This immediately implies for us that $G_{blue}$ is a subgraph of $K_{s-1}+T(n-s+1,r)$. Changing any blue edge in $K_{s-1}+T(n-s+1,r)$ to red destroys $\Theta(n^{k-2})$ copies of $K_k$, thus it decreases $g(G)$. This gives an alternative proof of \textbf{(i)} of Theorem \ref{blab}.

\begin{proof} We start with proving \textbf{(i)}.
    Let $G$ be a blue-red $F$-free graph with $g(G)=\ex^{{\mathrm col}}(n,F)$. We apply \textbf{(ii)} of Theorem \ref{strucc}. Assume first that there are $s$ independent edges $u_1v_1,\dots, u_sv_s$ inside the parts such that for each $i$, at least one of $u_i$ and $v_i$ are not in $B$. Observe that $u_i$ and $v_i$ have $\Omega(n)$ common neighbors in each part besides the one containing them. Using this, we can easily extend each edge to an $(r+1)$-chromatic component of $F$, where $u_iv_i$ plays the role of a color-critical edge. We can also find the other components to obtain a copy of $F$ in $G$, a contradiction.

    If $|B|\ge s$, then we can find $s$ distinct vertices among their neighbors not in $B$, resulting in the contradiction. By similar reasoning, there are no $s-|B|$ independent edges inside parts but outside $B$. Therefore, the edges inside parts that are not incident to any vertex of $B$ form at most $s-1-|B|$ stars plus $O(1)$ further edges. Since the vertices outside $B$ are incident to $o(n)$ edges inside parts, there are $o(n^{k-1})$ $k$-cliques containing such a vertex. This implies that deleting all the edges inside parts that are not incident to $B$, we lose $o(n^{|V(H)|-1})$ copies of $H$.
If $|B|<s-1$, then we can add a vertex to $B$ creating $\Theta(n^{|V(H)|-1})$ copies of $H$, a contradiction. We obtained that $|B|=s-1$ and then there is no edge inside parts but outside $B$. This implies that $G$ is a subgraph of $K_{s-1}+T(n-s+1,r)$, completing the proof.
\end{proof}

\bigskip

\textbf{Funding}: Research supported by the National Research, Development and Innovation Office - NKFIH under the grants SNN 129364, FK 132060, and KKP-133819.

\end{document}